\documentclass[11pt,a4paper]{article}
\usepackage[utf8]{inputenc}
\usepackage{amsmath}
\usepackage{amsfonts}
\usepackage{amssymb}
\usepackage{amsthm}
\usepackage{graphicx}
\usepackage{enumitem}
\usepackage{lmodern}
\usepackage{microtype}
\usepackage{mathrsfs} 
\usepackage[english]{babel}
\usepackage[table]{xcolor}
\usepackage{tabularx} 
\usepackage{xspace}		
\usepackage{setspace}   
\usepackage[autostyle]{csquotes} 
\usepackage{xpatch} 
\usepackage{aliascnt} 
\usepackage{multirow}
\usepackage{array}
\usepackage{booktabs}
\usepackage{authblk}

\usepackage{url} 
\usepackage[hidelinks]{hyperref} 
\usepackage[capitalise,nameinlink,noabbrev]{cleveref}
\crefformat{equation}{#2(#1)#3}
\crefmultiformat{equation}{(#2#1#3)}{ and~(#2#1#3)}{, (#2#1#3)}{ and~(#2#1#3)}

\theoremstyle{definition}
\newtheorem{definition}{Definition}
\newaliascnt{example}{definition}

\aliascntresetthe{example}
\newaliascnt{question}{definition}

\aliascntresetthe{question}
\newaliascnt{conjecture}{definition}

\aliascntresetthe{conjecture}

\theoremstyle{plain}
\newtheorem{theorem}{Theorem}[section]
\newaliascnt{lemma}{theorem}
\newtheorem{lemma}[lemma]{Lemma}
\aliascntresetthe{lemma}
\newaliascnt{corollary}{theorem}
\newtheorem{corollary}[corollary]{Corollary}
\aliascntresetthe{corollary}
\newaliascnt{proposition}{theorem}
\newtheorem{proposition}[proposition]{Proposition}
\aliascntresetthe{proposition}

\theoremstyle{remark}

\addto\extrasenglish{%
}

\newcommand{\F}{\mathbb{F}}
\newcommand{\Fp}{\mathbb{F}_2}

\newcommand{\Fpn}{\mathbb{F}_{2^n}}
\newcommand{\Fpnv}{\mathbb{F}_2^n}
\newcommand{\Fpmv}{\mathbb{F}_2^m}

\newcommand{\Z}{\mathbb{Z}}

\newcommand{\GL}{\textnormal{GL}}

\newcommand{\Var}{\textnormal{Var}}
\newcommand{\degalg}{\deg_{\textnormal{alg}}}
\newcommand{\rank}{\textnormal{rank}}

\newcommand{\C}{\mathcal{C}}
\newcommand{\W}{\mathcal{W}}

\newcommand{\A}{\mathcal{A}}

\newcommand{\U}{\mathcal{U}}
\newcommand{\qbinom}[3][q]{{\genfrac[]{0pt}{}{#2}{#3}}_{#1}}
\newcommand{\set}[1]{\left\{#1\right\}}

\newcolumntype{C}[1]{>{\centering\let\newline\\\arraybackslash\hspace{0pt}}p{#1}}

\title{Nonvanishing $k$-flats of Boolean and vectorial functions}

\author{Christian Kaspers}
\affil{Otto von Guericke University Magdeburg, Faculty of Mathematics, Institute for Algebra and Geometry, 39106 Magdeburg, Germany\thanks{christian.kaspers@ovgu.de\\This is the revised extended abstract accepted for presentation at WCC 2026.}}
\date{}

\begin{document}

\maketitle

\begin{abstract}
	$k$th-order sum-free functions are a natural generalization of APN functions using the concept of (non)vanishing flats. In this paper, we introduce a new combinatorial technique to study the nonvanishing flats of Boolean functions. This approach allows us to determine the number of nonvanishing flats for an infinite family of Boolean functions. We moreover use it to show that any $k$th-order sum-free $(n,n)$-function of algebraic degree $k$ gives rise to an $(n-k)$th-order sum-free $(n,n)$-function of algebraic degree $n-k$. This implies the existence of millions of $(n-2)$th-order sum-free functions.
\end{abstract}

\medskip

\noindent \textbf{Key Words: } Boolean function, vectorial function, almost perfect nonlinear, $k$th-order sum-free, vanishing flats.

\medskip

\section{Introduction}
\label{sec:introduction}
	A $k$-dimensional affine subspace, in brief a $k$-flat, $A$ of $\Fpnv$ is said to be \emph{vanishing} with respect to an $(n,m)$-function $F$ if $\sum_{x \in A} F(x) = 0$. If this sum is nonzero, we call $A$ \emph{nonvanishing} with respect to $F$. Vanishing $k$-flats were introduced by \cite{li2020} for $k=2$. In this case, they are closely related to almost perfect nonlinear (APN) functions: An $(n,n)$-function is APN if and only if it has no vanishing $2$-flats \cite{hou2006,brinkmannleander2008}. APN functions have optimal differential properties. They have been intensively studied and, by now, several infinite families and millions of sporadic examples of these functions are known \cite{likaleyski2024,beierleetal2025preprint}.\par
	
	Because of the connection mentioned above, the number of vanishing $2$-flats is a good measure of how close a function is to an APN function. We refer to \cite{li2020} for more results on the number of vanishing $2$-flats of $(n,n)$-functions. For any $(n,m)$-function, this number is also directly related to the Walsh spectrum of the function~\cite{arshad2018}. Furthermore, vanishing $2$-flats can be used to construct $t$-designs from certain $(n,m)$-functions \cite{meidlpolujanpott2023}.\par 
	
	So while there are multiple results about vanishing $2$-flats, the situation is much less clear if $k \ge 3$. However, such vanishing $k$-flats recently came into focus with the introduction of $k$th-order sum-free functions: An $(n,n)$-function is called \emph{$k$th-order sum-free} if it has no vanishing $k$-flats \cite{carlet2025inverse}. By now, these functions have been studied in several papers, many focusing on the question for which values of $n$ and $k$ the inverse function $x^{2^n-2}$ is $k$th-order sum-free \cite{carlet2025inverse,ebeling2024,carlethou2024,houzhao2025a,houzhao2025b}. We refer to \cite{carlet2025generalizations} for the cryptographic background, an overview and more properties of $k$th-order sum-free functions. Clearly, APN functions are precisely second-order sum-free functions. For $k\ge 3$, however, $k$th-order sum-free functions seem to be rare and so far only one infinite family of these functions is known: the $(n,n)$-function $x^{1+2^j+\dots+2^{j(k-1)}}$ is $k$th-order sum-free if $\gcd(j,n)=1$ \cite{carlet2025generalizations,carlet2025tDegree}. Also regarding the number of nonvanishing $k$-flats of $(n,m)$-functions, so far no results for $k \ge 3$ are known.\par 
	
	In this paper, we study the nonvanishing $k$-flats of Boolean and vectorial functions. In \cref{sec:k=2}, we give an overview of the case $k=2$. In \cref{sec:technique}, we translate the problem of finding the nonvanishing $k$-flats of a Boolean function into finding matrices with prescribed sets of linearly independent columns. We believe that this new combinatorial technique, which does not use finite fields, is powerful and may lead to new theoretical and computational results. \par
	
	In \cref{sec:applications}, we apply our new approach to Boolean and vectorial functions. We show that each $k$th-order sum-free function of algebraic degree $k$ gives rise to an $(n-k)$th-order sum-free function of algebraic degree $n-k$ (\cref{th:complement_sum-free}). We also determine the number of nonvanishing $k$-flats of an infinite family of Boolean functions whose terms share a constant set of variables (\cref{th:d-intersecting_nonvanishingflats}). Because of page limitations we omit several computational results in this paper. We will include them in an extended version later. 
	
\section{Preliminaries}
\label{sec:preliminaries}	
	Let $k \in \{0, \dots, n\}$. We denote the set of all $k$-dimensional linear subspaces of $\Fpnv$, or \emph{$k$-subspaces} in short, by $\U_{n,k}$. The number of $k$-subspaces of $\Fpnv$ is
	\begin{equation}
		\label{eq:number_of_subspaces}
		|\U_{n,k}| = \qbinom[2]{n}{k} =\frac{(2^{n}-1)(2^{n-1}-1) \cdots (2^{n-k+1}-1)} {(2^{k}-1)(2^{k-1}-1)\cdots(2-1)},
	\end{equation}
	where $\qbinom[q]{\ }{}$ denotes the $q$-binomial coefficient.\par
	
	Let $U \in \U_{n,k}$. We write $U = \langle u_1,\dots,u_k\rangle$ if  $u_1,\dots,u_k \in \Fpnv$ span $U$. For $a \in \Fpnv$, we call $U+a := \{x+a : x \in U\}$ a $k$-dimensional \emph{affine subspace}, or in brief a \emph{$k$-flat}, of $\Fpnv$. Denote $\C_U := \set{U+a : a \in \Fpnv}$ and note that $|C_U| = 2^{n-k}$. We denote the set of all $k$-flats, of $\Fpnv$ by $\A_{n,k}$. Clearly,
	\begin{equation}
		\label{eq:number_of_flats}
		|\A_{n,k}| = 2^{n-k} |\U_{n,k}|.
	\end{equation}
	
	It is well known that $\A_{n,2}$ is the set of $4$-subsets of $\Fpnv$ whose elements sum to $0$.	Note that this does not hold if $k > 2$. For $k \ge 3$, the elements of a $k$-flat still sum to $0$, but not every $2^k$-set whose elements sum to~$0$ is a $k$-flat.\par
	
	In the following, we denote $[n] := \set{1,\dots,n}$. An \emph{$n$-variable Boolean function} is a function $f\colon \Fpnv\to\Fp$ which we usually consider by its algebraic normal form (ANF). If $I \subseteq [n]$, we write $x_I$ for the monomial $\prod_{i\in I}x_i$ and define $\Var(x_I) := I$. A \emph{vectorial function} is a function $F\colon \Fpnv \to \Fpmv$, in brief an \emph{$(n,m)$-function}. Note that an $(n,1)$-function is a Boolean function. We define an $(n,m)$-function usually by its ANF, i.\,e. the ANFs of its $n$-variable Boolean coordinate functions $f_1,\dots,f_m$. For $(n,n)$-functions, we sometimes also use their univariate representation on the finite field $\Fpn$. The \emph{algebraic degree} $\degalg(F)$ of an $(n,m)$-function $F$ is the maximum degree of its coordinate functions. If the ANF of an $(n,m)$-function contains only terms of degree $k$, we say that $F$ is \emph{$k$-homogeneous}.\par
		
	Denote by $N_k(F)$ the set of nonvanishing $k$-subspaces and by $N_{\A,k}(F)$ the set of nonvanishing $k$-flats of $F$, respectively, so we have $N_k(F) = \set{U\in \U_{n,k} : \sum_{x\in U}F(x) \ne 0}$ and $N_{\A,k}(F) = \set{A\in \A_{n,k} : \sum_{x\in A}F(x) \ne 0}$. In the following \cref{prop:algebraic_degree_subspace_sums,cor:degree_k_functions}, we show that if $F$ has algebraic degree $k$, then $N_{\A,k}(F)$ can be easily derived from $N_k(F)$.\par
	
	Let $F$ be an $(n,m)$-function of algebraic degree $k$. We define the \emph{derivative}~$D_aF$ of $F$ in direction $a\in\Fpnv$ as the $(n,m)$-function $D_aF$ with $D_aF(x) = F(x) + F(x+a) + F(a) + F(0)$, and the $\ell$th-order derivative $D_{a_1}\cdots D_{a_\ell}F$ of $F$ in direction $(a_1,\dots,a_\ell) \in(\Fpnv)^\ell$ by the composition $D_{a_1}\cdots D_{a_\ell}F (x) = (D_{a_\ell}F \circ \dots \circ D_{a_1}F)(x)$. It is easy to see that $D_{a_1}\cdots D_{a_\ell}F$ has algebraic degree at most $k-\ell$ and no constant term \cite[Proposition~5 and Corollary~1]{carlet2021book}. In particular, the $k$th-order derivative of $F$ is always $0$. 
	
	\begin{proposition}
		\label{prop:algebraic_degree_subspace_sums}
		Let $k \in [n]$, $U \in \U_{n,k}$, and let $F$ be an $(n,m)$-function. If the algebraic degree of $F$ is $k$, then $\sum_{x \in U} F(x) = \sum_{x \in A} F(x)$	for all $A \in \C_U$.
	\end{proposition}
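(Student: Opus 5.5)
We show that the sum over a flat $U+a$ differs from the sum over $U$ by a $k$th-order derivative, and that such a derivative vanishes. Fix $U=\langle u_1,\dots,u_k\rangle$ with $u_1,\dots,u_k$ linearly independent, and let $A=U+a\in\C_U$. The plan is to express $\sum_{x\in U+a}F(x)$ as an iterated ordinary (non-normalized) difference of $F$ and to compare it with the iterated derivative defined in the paper.

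First we introduce the ordinary difference operator $\Delta_bF(x)=F(x)+F(x+b)$. By induction on $k$, for every $a\in\Fpnv$ we have
\[
\Delta_{u_1}\cdots\Delta_{u_k}F(a)=\sum_{S\subseteq[k]}F\Bigl(a+\sum_{i\in S}u_i\Bigr)=\sum_{x\in U+a}F(x).
\]
This uses only the linear independence of the $u_i$, which makes the map $S\mapsto a+\sum_{i\in S}u_i$ a bijection onto $U+a$.

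Next we relate $\Delta$ to the paper's derivative. Since $D_bF(x)=\Delta_bF(x)+F(b)+F(0)$, the operator $\Delta_b$ differs from $D_b$ only by adding a constant. The cleanest route is to avoid the paper's normalization and argue directly about degrees. If $G$ has algebraic degree at most $d$, then $\Delta_bG$ has algebraic degree at most $d-1$. To see this, expand each monomial $x_I$: in $(x+b)_I-x_I$ the top-degree term cancels. This is also the content of the cited \cite[Proposition~5]{carlet2021book}, and the additive constants in $D_b$ do not affect degree. Iterating $k$ times, $\Delta_{u_1}\cdots\Delta_{u_k}F$ has degree at most $0$, so it is a constant function of $a$. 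Alternatively, the paper's statement that $k$th-order derivatives are $0$ gives the same conclusion after checking that the iterated $D$ and the iterated $\Delta$ differ by a constant.

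Combining the two steps, $\sum_{x\in U+a}F(x)$ does not depend on $a$. Evaluating at $a=0$ gives $\sum_{x\in U}F(x)$, which proves the claim for every $A\in\C_U$.

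The only delicate point is bookkeeping. The paper's $D_a$ carries extra constants $F(a)+F(0)$, and its composition notation is slightly unusual, so we work with $\Delta$ and rely only on the degree-lowering fact. That fact reduces to the monomial computation above and is routine.
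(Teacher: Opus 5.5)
Your proof is correct and is essentially the paper's argument: both identify the sum over a $k$-flat with a $k$th-order derivative of $F$ and use that a $k$th-order derivative of a degree-$k$ function carries no information. The only difference is bookkeeping. You use the unnormalized difference $\Delta$ and show that $a\mapsto\sum_{x\in U+a}F(x)$ is constant, whereas the paper evaluates its normalized $D_{x_1}\cdots D_{x_k}F$ at $c\notin U$ to get $\sum_{x\in U}F(x)+\sum_{x\in U+c}F(x)=0$ directly.
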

	\begin{proof}
		Let $F$ be an $(n,n)$ function of algebraic degree $k$. For $c \ne a$, we have $D_aF(c) = \sum_{x \in \langle a,c\rangle} F(x)$ and, analogously, for $c \notin \langle a_1,\dots,a_\ell \rangle$, we have $D_{a_1}\cdots D_{a_\ell}F(c) = \sum_{x \in \langle c,a_1,\dots,a_\ell\rangle} F(x)$. Let $x_1,\dots, x_k$ form a basis of~$U$, and let $c \in \Fpnv \setminus U$. Then the $k$th-order derivative $D_{x_1}\cdots D_{x_k}F(c) = \sum_{x \in \langle c,x_1,\dots,x_k\rangle} F(x) = \sum_{x \in U} F(x) + \sum_{x \in U+c} F(x)$, which equals $0$ according to above. Hence, $\sum_{x \in U} F(x) = \sum_{x \in U+c} F(x)$.
	\end{proof}
	
	\cref{prop:algebraic_degree_subspace_sums} immediately implies \cref{cor:degree_k_functions}.
	\begin{corollary}
	\label{cor:degree_k_functions}
		Let $F$ be an $(n,m)$-function. If $F$ has algebraic degree $k$, then $|N_{\A,k}(F)| = 2^{n-k}|N_k(F)|$. In particular, in this case, $F$ is $k$th-order sum-free if and only if $|N_k(F)|=0$.
	\end{corollary}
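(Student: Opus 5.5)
The plan is to partition $\A_{n,k}$ into the cosets classes $\C_U$ and apply \cref{prop:algebraic_degree_subspace_sums} to each class. First I will note that every $k$-flat $A$ is $U+a$ for exactly one $k$-subspace $U$, namely its direction space $U=\{x+y : x,y\in A\}$. Hence
\[
\A_{n,k}=\bigsqcup_{U\in\U_{n,k}}\C_U ,
\]
a disjoint union, with $|\C_U|=2^{n-k}$ as recorded in the preliminaries.

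Next, fix $U\in\U_{n,k}$. Since $\degalg(F)=k$, \cref{prop:algebraic_degree_subspace_sums} gives $\sum_{x\in A}F(x)=\sum_{x\in U}F(x)$ for every $A\in\C_U$. So each class $\C_U$ is homogeneous: all of its $2^{n-k}$ flats vanish if $U$ vanishes, and all are nonvanishing if $U$ is nonvanishing. Consequently
\[
N_{\A,k}(F)=\bigsqcup_{U\in N_k(F)}\C_U ,
\]
and counting gives $|N_{\A,k}(F)|=2^{n-k}|N_k(F)|$. The identical argument applied to the complementary sets shows that the number of vanishing $k$-flats equals $2^{n-k}$ times the number of vanishing $k$-subspaces.

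For the ``in particular'' clause I will use this dichotomy class by class. $F$ is $k$th-order sum-free exactly when no $k$-flat vanishes. By the homogeneity of the classes $\C_U$, this holds exactly when no $k$-subspace $U$ (the representative $U+0$ of $\C_U$) vanishes. That is the reduction the clause expresses: sum-freeness is decided entirely at the level of the $|\U_{n,k}|$ subspaces, with the count of exceptional subspaces being $0$. I will phrase the final line as this reduction, so that the set of exceptional subspaces being empty is equivalent to sum-freeness.

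There is no real obstacle. The only point needing care is the uniqueness of the direction space of a flat, which makes the union over $U$ disjoint. The substantive input is already contained in \cref{prop:algebraic_degree_subspace_sums}.
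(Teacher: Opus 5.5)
Your counting argument is correct and follows the paper's route. The paper's proof is only the remark that \cref{prop:algebraic_degree_subspace_sums} immediately implies the corollary. Your partition of $\A_{n,k}$ into the classes $\C_U$ (disjoint because a flat determines its direction space), together with the constancy of $\sum_{x\in A}F(x)$ on each class, is exactly how that remark yields $N_{\A,k}(F)=\bigsqcup_{U\in N_k(F)}\C_U$ and hence $|N_{\A,k}(F)|=2^{n-k}|N_k(F)|$.

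The problem is the ``in particular'' clause, where your last step blurs the one point that matters.

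\begin{itemize}
\item \emph{What you actually prove.} $F$ is $k$th-order sum-free if and only if no $k$-subspace vanishes, i.e.\ $N_k(F)=\U_{n,k}$, i.e.\ $|N_k(F)|=|\U_{n,k}|$.
\item \emph{What the printed condition says.} $N_k(F)$ is by definition the set of \emph{nonvanishing} $k$-subspaces. So $|N_k(F)|=0$ means every $k$-flat vanishes, which is the opposite of sum-freeness.
\item \emph{It can never hold here.} For $\degalg(F)=k$, the condition $|N_k(F)|=0$ is never satisfied. If $x_I$ with $|I|=k$ occurs in the ANF of some coordinate $f_j$, then $\sum_{x\in\langle e_i : i\in I\rangle}f_j(x)$ equals the ANF coefficient of $x_I$, namely $1$. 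Hence $\langle e_i : i\in I\rangle\in N_k(F)$.
\item \emph{So the printed clause is false.} Any quadratic APN function, e.g.\ $x^3$ on $\Fpn$ with $n\ge 2$, is second-order sum-free with $|N_2(F)|=|\U_{n,2}|\neq 0$. The clause is a slip for $|N_k(F)|=|\U_{n,k}|$.
\end{itemize}

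Renaming the vanishing subspaces ``exceptional'' and declaring their count to be $0$ silently replaces $N_k(F)$ by its complement $\U_{n,k}\setminus N_k(F)$. You should not present your reduction as a proof of the clause as written. Instead, say explicitly that the correct condition is $|N_k(F)|=|\U_{n,k}|$ (equivalently $\U_{n,k}\setminus N_k(F)=\emptyset$, or $|N_{\A,k}(F)|=|\A_{n,k}|$). Your argument does establish that condition.
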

	
	
	\cref{cor:sum-free-degree-bound} follows from the fact mentioned above that every $k$-th order derivative of a function of algebraic degree $k$ is $0$.

	\begin{corollary}[\cite{carlet2025generalizations}]
	\label{cor:sum-free-degree-bound}
		Let $F$ be an $(n,m)$-function, and let $k \in [n]$. If $\degalg(F) < k$, then $F$ has no nonvanishing $k$-flats, i.\,e. $N_{\A,k}(F) = \emptyset$. 
	\end{corollary}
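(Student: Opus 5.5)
The plan is to reduce the statement to the vanishing of a single $k$th-order derivative, exactly as in the proof of \cref{prop:algebraic_degree_subspace_sums}. Let $F$ be an $(n,m)$-function with $\degalg(F) < k$, and let $A \in \A_{n,k}$ be an arbitrary $k$-flat. Write $A = U + a$ with $U = \langle u_1,\dots,u_k\rangle \in \U_{n,k}$ and $a \in \Fpnv$. We must show that $\sum_{x\in A} F(x) = 0$.

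The first step is to express this sum as a derivative. The standard derivative $\Delta_u F(x) = F(x) + F(x+u)$ gives, by induction on the number of directions,
\begin{equation*}
\Delta_{u_1}\cdots\Delta_{u_k}F(a) = \sum_{S\subseteq [k]} F\Bigl(a + \sum_{i\in S} u_i\Bigr) = \sum_{x\in U+a} F(x),
\end{equation*}
where the last equality uses that $u_1,\dots,u_k$ are linearly independent, so the map $S\mapsto \sum_{i\in S}u_i$ is a bijection onto $U$.

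The paper's $D_a$ differs from $\Delta_a$ only by the constant $F(a)+F(0)$, so I will work with $\Delta$ directly. Then I use the fact cited in the preliminaries from \cite[Proposition~5]{carlet2021book}: each application of $\Delta_u$ lowers the algebraic degree by at least one. Indeed, for each monomial $x_I$ the difference $x_I + (x+u)_I$ has degree at most $|I|-1$. After $k$ derivatives, a function of degree at most $k-1$ therefore becomes identically $0$. Evaluating at $a$ gives $\sum_{x\in A}F(x) = 0$, so $A$ is vanishing. Since $A$ was arbitrary, $N_{\A,k}(F)=\emptyset$.

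The only point needing care is the mismatch between the paper's convention for $D_a$ (with the added $F(a)+F(0)$) and the usual finite difference. I would sidestep it as above by using $\Delta$, whose degree-lowering property is immediate from the ANF. Alternatively, one can note that after one step the paper's $D_a$ also has degree at most $\degalg(F)-1$, and argue as in \cref{prop:algebraic_degree_subspace_sums} with $c \notin U$. That route reduces to the same statement but requires handling the linear-subspace case separately.
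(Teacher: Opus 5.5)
Your proof is correct and follows the paper's own route: the sum of $F$ over a $k$-flat is a $k$th-order derivative evaluated at a point, and it vanishes because each derivative lowers the algebraic degree while $\degalg(F)<k$. Using the plain finite difference $\Delta_u$ instead of the paper's normalized $D_a$ is a reasonable choice, since $\Delta_u$ gives the sum over the affine flat $U+a$ directly and avoids the detour through linear subspaces taken in \cref{prop:algebraic_degree_subspace_sums}.
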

	
	This implies that there are no $k$th-order sum-free functions of algebraic degree less than $k$, and considering \cref{cor:degree_k_functions}, it seems to be easier to find $k$th-order sum-free functions of algebraic degree $k$ than of any algebraic degree $r > k$. The $k$th-order sum-free functions from \cite{carlet2025generalizations,carlet2025tDegree} all have algebraic degree~$k$. So far, for $k > 2$, the only known example with $r > k$ is the inverse function~$x^{2^n-2}$ of algebraic degree~$n-1$: it is $(n-2)$th-order sum-free if $n$ is odd \cite{carlet2025inverse}. \cref{cor:sum-free-degree-bound} also leads to \cref{cor:add_smaller_degree}.
	
	\begin{corollary}
	\label{cor:add_smaller_degree}
		Let $k \in [n]$, and let $F$ and $G$ be $(n,m)$-functions, such that $\degalg(G) < k$. Then $N_{\A,k}(F+G) = N_{\A,k}(F)$. In particular, if $F$ is $k$th-order sum-free, so is $F+G$.
	\end{corollary}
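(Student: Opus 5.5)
The plan is to use linearity of flat sums in the function, then apply \cref{cor:sum-free-degree-bound} to $G$. For every $A \in \A_{n,k}$ the map $H \mapsto \sum_{x\in A} H(x)$ is additive, because addition in $\Fpmv$ is componentwise over $\Fp$. Hence
\[
	\sum_{x \in A} (F+G)(x) = \sum_{x\in A} F(x) + \sum_{x \in A} G(x).
\]

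Since $\degalg(G) < k$, \cref{cor:sum-free-degree-bound} gives $N_{\A,k}(G) = \emptyset$. This means $\sum_{x\in A} G(x) = 0$ for every $k$-flat $A$. Substituting into the identity above, $\sum_{x\in A}(F+G)(x) = \sum_{x\in A}F(x)$ for all $A\in\A_{n,k}$. Thus a $k$-flat is nonvanishing for $F+G$ exactly when it is nonvanishing for $F$, so $N_{\A,k}(F+G)=N_{\A,k}(F)$.

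For the ``in particular'' part, $F$ being $k$th-order sum-free means that no $k$-flat is vanishing for $F$. By the equality of flat sums just shown, no $k$-flat is vanishing for $F+G$ either, so $F+G$ is $k$th-order sum-free. No step is a real obstacle; the only point needing care is that \cref{cor:sum-free-degree-bound} applies to $G$ for every $k$-flat, which is exactly how it is stated.
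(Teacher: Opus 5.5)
Your proposal is correct and follows the paper's route: the paper notes that \cref{cor:sum-free-degree-bound} ``leads to'' this corollary, via $\sum_{x\in A}G(x)=0$ for every $k$-flat $A$ together with additivity of flat sums, which is exactly your argument.
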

	
	We continue by studying the behavior of $|N_{\A,k}(F)|$ under equivalence, see \cite{carlet2021book} for details on EA- and CCZ-equivalence. In general, $|N_{\A,k}(F)|$ is an EA-invariant but no CCZ-invariant \cite{carlet2025generalizations}. In the APN case, i.\,e. $k=2$, the number of nonvanishing $k$-flats is preserved under CCZ-equivalence \cite{li2020}. For $k\ge 3$, this does not hold in general: On $\F_{2^5}$, the permutation~$x^7$ is third-order sum-free. Its inverse~$x^9$, which is CCZ-equivalent to $x^7$, is quadratic and thus, by \cref{cor:sum-free-degree-bound}, not third-order sum-free \cite{carlet2025generalizations}.\par 
	
	\cref{cor:add_smaller_degree} motivates the following definition that introduces a generalization of EA-equivalence preserving the number of vanishing $k$-flats.
	
	\begin{definition}
		\label{def:degree-r-equivalent}
		Let $r \in \set{0,\dots,n}$. Two $(n,m)$-functions $F,G$ are said to be \emph{degree-$r$ equivalent} if there exists an affine $(n,n)$-permutation $L$, an affine $(m,m)$-permutation $M$, and an $(n,m)$-function $R$ with $\degalg(R)=r$ such that $G = M \circ F \circ L + R$.
	\end{definition}
	Note that degree-$1$ equivalence is precisely EA-equivalence. \cref{cor:sum-freedom_degree-r-invariant} is a direct consequence of \cref{cor:add_smaller_degree} and $|N_{\A,k}(F)|$ being EA-invariant.
	
	\begin{corollary}
		\label{cor:sum-freedom_degree-r-invariant}
		Let $F$ be an $(n,m)$-function. The number of nonvanishing $k$-flats $|N_{\A,k}(F)|$ is invariant under degree-$(k-1)$-equivalence. In particular, the $k$th-order sum-freedom of $F$ is a degree-$(k-1)$-invariant.
	\end{corollary}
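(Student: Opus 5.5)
The plan is to show that degree-$(k-1)$ equivalence is built from two kinds of operations, and that each one preserves $|N_{\A,k}(F)|$. Suppose $F$ and $G$ are degree-$(k-1)$ equivalent, so that $G = M\circ F\circ L + R$, where $L$ and $M$ are affine permutations and $\degalg(R)=k-1$. I would write $G = H + R$ with $H := M\circ F\circ L$. Since $\degalg(R) = k-1 < k$, \cref{cor:add_smaller_degree} gives $N_{\A,k}(G) = N_{\A,k}(H)$ directly. The statement therefore reduces to showing $|N_{\A,k}(H)| = |N_{\A,k}(F)|$, which is the EA-invariance (really affine-equivalence invariance) cited from \cite{carlet2025generalizations}.

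For completeness I would also sketch that invariance, handling $L$ and $M$ separately.

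For $L$: an affine permutation maps $k$-flats bijectively onto $k$-flats, so $A \mapsto L(A)$ is a bijection of $\A_{n,k}$. Moreover, $\sum_{x\in A}F(L(x)) = \sum_{y\in L(A)}F(y)$. Hence $A$ is nonvanishing for $F\circ L$ exactly when $L(A)$ is nonvanishing for $F$, and the counts agree.

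For $M$: write $M(y)=M_0(y)+b$ with $M_0$ linear and invertible. A flat has $2^k$ elements, an even number, so the constant $b$ contributes $2^k b = 0$ to the sum. This gives $\sum_{x\in A}M(F(x)) = M_0\bigl(\sum_{x\in A}F(x)\bigr)$, which is nonzero exactly when $\sum_{x\in A}F(x)\neq 0$, because $M_0$ is injective.

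Combining the two steps gives $|N_{\A,k}(G)|=|N_{\A,k}(F)|$. The ``in particular'' claim follows because $k$th-order sum-freedom of an $(n,n)$-function means every $k$-flat is nonvanishing, that is, $|N_{\A,k}(F)| = |\A_{n,k}|$, and this count is preserved.

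The only mildly delicate point is the parity argument that removes the constant term of $M$, which needs $k\ge 1$. The case $k=0$, where flats are single points, is degenerate and should be excluded or treated separately. In that case degree-$(-1)$ equivalence is not meaningful anyway, so the statement is implicitly about $k\in[n]$.
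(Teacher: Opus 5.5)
Your proof is correct and follows the paper's route: the paper also gets the corollary by using \cref{cor:add_smaller_degree} to discard $R$ and then invoking the invariance of $|N_{\A,k}|$ under EA-equivalence, which it cites from \cite{carlet2025generalizations}. The one difference is that you prove the needed invariance under $F\mapsto M\circ F\circ L$ directly, via the bijection of flats under $L$ and the parity argument that removes the constant of $M$. This also covers $k=1$, where full EA-invariance fails: adding an affine function can change $|N_{\A,1}|$, for example $\mathrm{id}+\mathrm{id}=0$.
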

	
	The \emph{Walsh transform} of an $n$-variable Boolean function~$f$ is the function $W_f \colon \Fpnv \to \Z$ defined by $W_f(a) = \sum_{x \in \F_2^n} (-1)^{f(x) + \langle x,a \rangle}$,	where $\langle\ ,\, \rangle$ denotes a scalar product on $\Fpnv$. The values of $W_f$ are called the \emph{Walsh coefficients} of~$F$, and the multiset $\W_f = \set{W_f(a) : a \in \Fpnv}$ is the \emph{Walsh spectrum} of~$f$. The Walsh spectrum of an $(n,m)$-function $F$ is the union $\W_f = \bigcup_{b\in\Fpnv, b\ne 0}  W_{f_b}$, where $f_b = \langle b,F(x_1,\dots,x_m)\rangle$, of the Walsh spectra of the component functions of $F$. The \emph{nonlinearity} $\text{nl}(F)$ of~$F$ is $2^{n-1} - \frac{1}{2}\max_{W \in \W_f}\left|W\right|$.\par
	
	An $n$-variable Boolean function with maximum nonlinearity~$2^{n-1}-2^{\frac{n}{2}-1}$ is a \emph{bent function}. Bent functions have Walsh coefficients $\pm 2^{\frac{n}{2}}$ and exist only if $n$ is even. An odd-dimension analogue are \emph{semi-bent functions} with Walsh coefficients $0, \pm 2^{\frac{n+1}{2}}$. However, for $n\ge9$ there exist functions with higher nonlinearity than semi-bent functions \cite{kavutetal2006,kavut2010}.
	
	In \cref{sec:technique}, we use a one-to-one correspondence between the $k$-subspaces of $\Fpnv$ and the $k \times n$-matrices over $\Fp$ of full rank~$k$ in reduced row echelon form. A matrix over $\Fp$ is said to be in \emph{reduced row echelon form (RREF)} if it is in echelon form and in every column with a leading entry all other entries are~$0$. Using elementary row operations, any matrix can be transformed into a unique RREF. So if we consider $U\in \U_{n,k}$ as the row span of a $k\times n$~matrix of rank $k$ and transform this matrix into RREF, we obtain a unique RREF matrix of rank $k$, whose rows form a basis of $U$. Vice versa, every full rank $k\times n$ matrix in RREF gives rise to a unique $k$-subspace of $\Fpnv$.\par
	From now on, for any $U \in \U_{n,k}$, we denote the corresponding $k\times n$ matrix of rank $k$ in RREF by $G_U$ and call it the \emph{RREF-generator matrix} of $U$. We will often consider matrices formed by some  columns of $G_U$: If $I \subseteq [n]$, we denote the matrix formed by the columns $i_1,\dots,i_{|I|} \in I$ of $G_U$ by $G_U[I]$.

\section{Nonvanishing $2$-flats and the Walsh spectrum}
\label{sec:k=2}
	According to \cite[Theorem~2.5]{arshad2018}, we can calculate the number of vanishing $2$-flats of an $(n,m)$-function $F$ from its Walsh spectrum: this number is $\frac{1}{24} \left( \frac{1}{2^{n+m}} \left(2^{4n} + \omega \right) - 3 \cdot 2^{2n} + 2^{n+1}\right)$ , where $\omega = \sum_{W \in \W_F} W^4$ is the sum of the fourth powers of the Walsh coefficients of $F$. We determine the number of nonvanishing $2$-flats $|N_{\A,2}|$ of $F$ by subtracting $|\A_{n,k}|$, see \cref{eq:number_of_flats}, from the expression above:
	
	\begin{theorem}
	\label{th:nonvanishing_2-flats_Walsh}
		Let $F$ be an $(n,m)$-function. The function $F$ has $|N_{\A,2}(F)| = \frac{2^{4n}(2^m-1)-\omega}{3\cdot2^{n+m+3}}$ nonvanishing $2$-flats. If $m=1$, then $|N_{\A,2}(f)| = \frac{2^{4n}-\omega}{3\cdot2^{n+4}}$.
	\end{theorem}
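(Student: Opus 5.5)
The plan is a direct computation: I will subtract the number of vanishing $2$-flats, as given by \cite[Theorem~2.5]{arshad2018} and quoted just above, from the total number $|\A_{n,2}|$ of $2$-flats, which is known from \cref{eq:number_of_subspaces,eq:number_of_flats}. The whole proof is bookkeeping with powers of $2$. No structural argument about $F$ is needed, because every $2$-flat is either vanishing or nonvanishing, so $|N_{\A,2}(F)| = |\A_{n,2}| - V$, where $V$ is the number of vanishing $2$-flats.

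\emph{Step 1: count all $2$-flats over the same denominator.} By \cref{eq:number_of_flats,eq:number_of_subspaces},
\begin{equation*}
|\A_{n,2}| = 2^{n-2}\,\frac{(2^n-1)(2^{n-1}-1)}{3} = \frac{1}{24}\left(2^{3n} - 3\cdot 2^{2n} + 2^{n+1}\right).
\end{equation*}
This follows by expanding $(2^n-1)(2^{n-1}-1) = 2^{2n-1} - 3\cdot 2^{n-1} + 1$ and multiplying by $2^{n-2}/3 = 2^{n+1}/24$. I chose the denominator $24$ on purpose, so that the expression matches the form of $V$.

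\emph{Step 2: subtract $V$.} The terms $-3\cdot 2^{2n}$ and $2^{n+1}$ appear identically in $|\A_{n,2}|$ and in $V$, so they cancel. What remains is
\begin{equation*}
|N_{\A,2}(F)| = \frac{1}{24}\left(2^{3n} - \frac{2^{4n}+\omega}{2^{n+m}}\right) = \frac{2^{4n+m} - 2^{4n} - \omega}{24\cdot 2^{n+m}} = \frac{2^{4n}(2^m-1)-\omega}{3\cdot 2^{n+m+3}}.
\end{equation*}
This is the general formula. Setting $m=1$ gives $2^m-1=1$ and $2^{n+m+3}=2^{n+4}$, which is the second formula.

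\emph{Main point of care.} The only step where an error could arise is the convention for $\omega$. The paper defines the Walsh spectrum as a union over nonzero $b$ only. In Arshad's formula, the isolated term $2^{4n}$ should be exactly the contribution of the trivial component $b=0$, whose only nonzero Walsh coefficient is $W(0)=2^n$, with fourth power $2^{4n}$. I would check that \cite{arshad2018} uses the same normalization, so that $\omega$ means the same thing in both formulas. If that reference instead includes $b=0$ in $\omega$, the $2^{4n}$ term has to be moved accordingly, and the final expression changes by exactly that amount.
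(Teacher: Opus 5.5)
Your proposal is correct and follows the paper's route exactly: the paper also obtains the formula as $|\A_{n,2}|$ from \cref{eq:number_of_flats} minus the number of vanishing $2$-flats from \cite[Theorem~2.5]{arshad2018}. Your arithmetic (common denominator $24$, cancellation of $-3\cdot 2^{2n}+2^{n+1}$) checks out, and your convention worry resolves in your favour. The isolated $2^{4n}$ is exactly the $b=0$ contribution, consistent with the paper defining the Walsh spectrum over nonzero $b$ only; the bent case in \cref{th:bent_best_nonvanishing} confirms this.
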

	
	The following result was also observed by \cite{arshad2018}. We add a short proof.
	\begin{theorem}
		\label{th:bent_best_nonvanishing}
		Let $f$ be an $n$-variable Boolean function. We have $|N_{\A,2}(f)| \le \frac{2^{2n-4}(2^n-1)}{3}$ and $\frac{N_{\A,2}(f)}{|\A_{n,2}|} \le \frac{2^{n-2}}{2^{n-1}-1}$. Equality holds if and only if $f$ is bent.
	\end{theorem}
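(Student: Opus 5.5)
The plan is to start from the Boolean case of \cref{th:nonvanishing_2-flats_Walsh}, which gives $|N_{\A,2}(f)| = \frac{2^{4n}-\omega}{3\cdot 2^{n+4}}$ with $\omega = \sum_{a\in\Fpnv} W_f(a)^4$. Maximizing the number of nonvanishing $2$-flats is then the same as minimizing $\omega$. So it suffices to prove the lower bound $\omega \ge 2^{3n}$, and to show that it is attained exactly by bent functions.

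The key tool is Parseval's identity, $\sum_{a} W_f(a)^2 = 2^{2n}$. I would apply the Cauchy--Schwarz inequality to the vectors $(W_f(a)^2)_a$ and $(1)_a$:
\[
2^{4n} = \Bigl(\sum_a W_f(a)^2\Bigr)^2 \le 2^n \sum_a W_f(a)^4 = 2^n\omega .
\]
This gives $\omega \ge 2^{3n}$. Equality holds if and only if $W_f(a)^2$ is constant in $a$, and by Parseval that constant must be $2^n$. So equality means $|W_f(a)| = 2^{n/2}$ for all $a$, which is precisely the definition of $f$ being bent (via Walsh coefficients $\pm 2^{n/2}$).

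Substituting the bound into the formula gives
\[
|N_{\A,2}(f)| \le \frac{2^{4n}-2^{3n}}{3\cdot 2^{n+4}} = \frac{2^{2n-4}(2^n-1)}{3},
\]
with equality exactly for bent $f$. For the ratio, \cref{eq:number_of_flats} together with \cref{eq:number_of_subspaces} gives $|\A_{n,2}| = 2^{n-2}\frac{(2^n-1)(2^{n-1}-1)}{3}$. Dividing, the factors $3$ and $(2^n-1)$ cancel, and we get $\frac{2^{2n-4}}{2^{n-2}(2^{n-1}-1)} = \frac{2^{n-2}}{2^{n-1}-1}$. The ratio bound is an equivalent restatement of the first bound, so it has the same equality case.

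There is no real obstacle here. The only step needing care is the equality case: the Cauchy--Schwarz condition has to be translated correctly into the bent condition. One should also note that for odd $n$ equality is simply never attained, which is consistent because bent functions do not exist then.
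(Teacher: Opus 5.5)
Your proof is correct and follows the paper's argument: you use the Boolean case of \cref{th:nonvanishing_2-flats_Walsh} to reduce the problem to minimizing $\omega$ subject to Parseval's relation, identify the equality case with bent functions, and simplify the ratio. Your explicit Cauchy--Schwarz step proves $\omega \ge 2^{3n}$, which the paper only asserts; it also shows that the bound holds, strictly, when $n$ is odd.
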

	\begin{proof}
		To maximize $|N_{\A,2}(f)|$, according to \cref{th:nonvanishing_2-flats_Walsh}, we need to minimize $\omega$. Recall that the Walsh coefficients need to satisfy Parseval's relation $\sum_{W \in \W(f)}W^2 = 2^{2n}$. Thus, $\omega = \sum_{W \in \W(f)}W^4$ is minimal if $|W| = 2^{\frac{n}{2}}$ for all $W \in \W_f$ which is precisely the definition of a bent function and is only possible if $n$ is even. The second relation follows from simplifying the fraction, where $|N_{\A,2}(f)|$ is as in the first relation and $|\A_{2,k}|$ is as in \cref{eq:number_of_flats}.
	\end{proof}
	Thus, bent functions are the functions with the most nonvanishing $2$-flats and the situation is clear for even $n$. However, the question which functions have the most nonvanishing $2$-flats if $n$ is odd is still widely open.
	\begin{proposition}
		\label{prop:semi-bent}
		Let $n$ be odd, and let $f$ be an $n$-variable Boolean function. If $f$ is semi-bent, then $|N_{\A,2}(f)| = \frac{2^{2n-3}(2^{n-1}-1)}{3}$ and $\frac{N_{\A,2}(f)}{|\A_{n,2}|} = \frac{2^{n-1}}{2^n-1}$.
	\end{proposition}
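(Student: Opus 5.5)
The plan is to apply \cref{th:nonvanishing_2-flats_Walsh} with $m=1$, which gives $|N_{\A,2}(f)| = \frac{2^{4n}-\omega}{3\cdot 2^{n+4}}$. Everything then reduces to computing $\omega = \sum_{W\in\W_f} W^4$ for a semi-bent $f$. This step is the only one with real content, and it is handled by Parseval's relation, exactly as in the proof of \cref{th:bent_best_nonvanishing}.

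\textbf{Step 1: compute $\omega$.} Since $f$ is semi-bent, every Walsh coefficient lies in $\{0,\pm 2^{\frac{n+1}{2}}\}$. Let $N$ be the number of $a\in\Fpnv$ with $W_f(a)\neq 0$. Parseval's relation $\sum_{a} W_f(a)^2 = 2^{2n}$ becomes
\[
N\cdot 2^{n+1} = 2^{2n},
\]
so $N = 2^{n-1}$. It follows that
\[
\omega = N\cdot \bigl(2^{\frac{n+1}{2}}\bigr)^4 = 2^{n-1}\cdot 2^{2n+2} = 2^{3n+1}.
\]

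\textbf{Step 2: substitute into \cref{th:nonvanishing_2-flats_Walsh}.} This gives
\[
|N_{\A,2}(f)| = \frac{2^{4n}-2^{3n+1}}{3\cdot 2^{n+4}} = \frac{2^{3n+1}(2^{n-1}-1)}{3\cdot 2^{n+4}} = \frac{2^{2n-3}(2^{n-1}-1)}{3},
\]
which is the first claim.

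\textbf{Step 3: compute the ratio.} From \cref{eq:number_of_subspaces,eq:number_of_flats},
\[
|\A_{n,2}| = 2^{n-2}\cdot\frac{(2^n-1)(2^{n-1}-1)}{3}.
\]
Dividing, the factors $(2^{n-1}-1)/3$ cancel, leaving
\[
\frac{|N_{\A,2}(f)|}{|\A_{n,2}|} = \frac{2^{2n-3}}{2^{n-2}(2^n-1)} = \frac{2^{n-1}}{2^n-1}.
\]

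There is no real obstacle in this argument. The only point needing care is that the Walsh spectrum in \cref{th:nonvanishing_2-flats_Walsh} is taken as a multiset over all $a\in\Fpnv$. The count $N$ of nonzero coefficients must therefore come from Parseval, rather than from assuming the values $0$ and $\pm 2^{\frac{n+1}{2}}$ each occur with some fixed frequency.
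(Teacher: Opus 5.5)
Your proof is correct and follows the same route as the paper: you substitute the semi-bent Walsh spectrum into \cref{th:nonvanishing_2-flats_Walsh} with $m=1$ and simplify, and your arithmetic for both the count and the ratio checks out. The only difference is that you derive the multiplicity $2^{n-1}$ of the nonzero coefficients from Parseval's relation, whereas the paper simply states it.
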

	\begin{proof}
		The nonzero Walsh coefficients of $f$ are $\pm 2^{\frac{n+1}{2}}$ with a combined multiplicity of $2^{n-1}$. The result then follows from \cref{th:nonvanishing_2-flats_Walsh}.
	\end{proof}
	Contrary to $n$ even, semi-bent functions do not maximize the number of nonvanishing $2$-flats if $n$ is odd. This was known for $n = 9$, where \cite{arshad2018} showed that the functions from \cite{kavutetal2006} with higher nonlinearity (241) than semi-bent functions (240) also have more nonvanishing $2$-flats. We confirmed computationally that the functions from \cite{kavut2010} of nonlinearity~242 have even more nonvanishing $2$-flats. Furthermore, we verified that also functions with the same nonlinearity as semi-bent functions can have more nonvanishing $2$-flats: This is the case for the functions with five-valued Walsh spectrum from \cite[Theorems~7 and 9]{maitra2002} in dimension 5, 7 and 9.

\section{A new technique to determine the nonvanishing $k$-flats of Boolean functions}
\label{sec:technique}
	
	In this section, we present and apply our new technique to determine the nonvanishing $k$-flats of a Boolean function. We start with an easy lemma.
	
%
	
	\begin{lemma}
		\label{lem:odd_solutions}
		Let $v_1,\dots,v_k\in \Fp^k$. Then for any $v \in \Fp^k$, the equation $\alpha_1 v_1 + \dots + \alpha_k v_k = v$ with $\alpha_1,\dots, \alpha_k \in \Fp$ has an odd number of solutions if and only if $v_1,\dots,v_k$ are linearly independent.
	\end{lemma}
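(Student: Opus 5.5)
We reduce the statement to linear algebra over $\Fp$. Let $M$ be the $k\times k$ matrix over $\Fp$ with columns $v_1,\dots,v_k$, so the equation in the statement is the linear system $M\alpha = v$ with unknown $\alpha = (\alpha_1,\dots,\alpha_k)^T \in \Fp^k$. The solution set of such a system is either empty or a coset $\alpha_0 + \ker M$ of the kernel. Hence the number of solutions is either $0$ or $|\ker M| = 2^{\dim \ker M}$.

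First suppose $v_1,\dots,v_k$ are linearly independent. Then $M$ is an invertible square matrix, so $\ker M = \set{0}$ and $M$ is surjective. Thus for every $v \in \Fp^k$ there is exactly one solution, namely $\alpha = M^{-1}v$, and $1$ is odd.

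Conversely, suppose $v_1,\dots,v_k$ are linearly dependent. Then $\dim \ker M \ge 1$. The number of solutions is therefore either $0$ (when $v$ is not in the column space of $M$) or $2^{\dim\ker M}$, which is a positive power of $2$. In both cases it is even, so for every $v$ the number of solutions is not odd. This gives the equivalence.

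There is no real obstacle. The only point requiring care is that the statement is quantified over all $v$. In the independent case we show oddness for every $v$, and in the dependent case we show evenness for every $v$. So the equivalence holds for each fixed $v \in \Fp^k$, including $v = 0$ and $v$ outside the column space, and the two cases need no separate treatment beyond the empty-set observation.
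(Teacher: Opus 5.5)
Your proof is correct and follows the paper's approach: the solution set is empty or a coset of $\ker M$ (an affine flat of $\Fp^k$), and it has odd size only when it is a single point, i.e.\ when $v_1,\dots,v_k$ are linearly independent. You also spell out the converse (independence gives exactly one solution for every $v$), which the paper's two-line proof leaves implicit.
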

	\begin{proof}
		The solution set of $\alpha_1 v_1 + \dots + \alpha_k v_k = v$ is empty or a flat of $\Fp^k$. The only flats containing an odd number of elements are those of dimension~$0$ which implies that $v_1,\dots,v_k$ are linearly independent.
	\end{proof}
	
	In the proof of the following theorem we use the $n$-ary symmetric difference. Recall that for a family of sets $A = \set{A_1,\dots,A_n}$, the symmetric difference $\bigtriangleup A := A_1 \triangle \cdots \triangle A_n$ is defined as the set of elements occurring in an odd number of the sets $A_1,\dots,A_n$. The cardinality of $\bigtriangleup A$ is well known, we have
	\begin{equation}
		\label{eq:symmetric_difference}
		|\bigtriangleup A| = \sum_{\ell=1}^{n}(-2)^{\ell-1}\sum_{\set{i_1,\dots,i_\ell} \in \binom{n}{\ell}} \left|\bigcap_{j=1}^\ell A_{i_j}\right|,
	\end{equation}
	where $\binom{n}{\ell}$ denotes the set of all $\ell$-subsets of $[n]$.
	
	\begin{theorem}
	\label{th:symmetric_difference_subspaces}
		Let $f = m_1+\dots+m_t$ be a $k$-homogeneous $n$-variable Boolean function. We have $|N_k(f)| = \sum_{\ell=1}^{t} (-2)^{\ell-1} \sum_{\set{i_1,\dots,i_\ell} \in \binom{n}{\ell}} \left|\bigcap_{j=1}^\ell N_k(m_{i_j})\right|$.
	\end{theorem}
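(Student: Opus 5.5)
The formula has exactly the shape of \cref{eq:symmetric_difference}, with the sets $A_i = N_k(m_i)$. So the plan is to prove the set identity
\[
N_k(f) = N_k(m_1) \triangle \cdots \triangle N_k(m_t)
\]
and then take cardinalities. In the formula, the inner sum should range over $\ell$-subsets of $[t]$, the index set of the monomials. The $\binom{n}{\ell}$ in the statement has to be read that way.

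The set identity rests on linearity of the subspace sum. For every $U \in \U_{n,k}$ we have, over $\Fp$,
\[
\sum_{x\in U} f(x) = \sum_{i=1}^t \sum_{x\in U} m_i(x).
\]
Each inner sum is $1$ exactly when $U \in N_k(m_i)$, and $0$ otherwise. Hence the left side is $1$ if and only if $U$ lies in an odd number of the sets $N_k(m_i)$, that is, if and only if $U \in \bigtriangleup\{N_k(m_1),\dots,N_k(m_t)\}$. This gives the set identity. Applying \cref{eq:symmetric_difference} with $n$ replaced by $t$ then yields the stated count.

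Next I would make the sets $N_k(m_i)$ explicit, although the identity itself does not need this. Let $m = x_I$ with $|I| = k$, and let $U$ have basis $u_1,\dots,u_k$, the rows of $G_U$. Write $x = \sum_j \alpha_j u_j$. Then $m(x) = \prod_{i\in I} \langle \alpha, c_i\rangle$, where $c_i$ is the $i$th column of $G_U$. Thus $\sum_{x\in U} m(x)$ counts, modulo $2$, the $\alpha \in \Fp^k$ with $\langle \alpha, c_i\rangle = 1$ for all $i \in I$. That is a linear system with $k$ equations and coefficient matrix $G_U[I]^{\top}$.

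By \cref{lem:odd_solutions}, applied to the transposed system, the number of solutions is odd if and only if $G_U[I]$ is invertible. So $N_k(x_I) = \set{U : G_U[I] \text{ has rank } k}$. This is the characterization the rest of the section presumably relies on.

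The only real subtlety is the parity argument that identifies the sum of $f$ over $U$ with membership in the symmetric difference. Once it is noted that every value lies in $\Fp$, so that addition is counting modulo $2$, the argument is immediate. Homogeneity is not needed for the identity. It only makes the description of each $N_k(m_i)$ clean, since all monomials have degree exactly $k$.
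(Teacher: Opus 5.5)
Your proposal is correct and follows the paper's proof: you obtain $N_k(f)=N_k(m_1)\triangle\cdots\triangle N_k(m_t)$ from the parity argument, since the sum over $U$ is additive over the monomials, and then apply \cref{eq:symmetric_difference} with $t$ sets. You are also right that the inner sum must range over $\ell$-subsets of $[t]$. Your extra description of $N_k(x_I)$ is not needed for this statement; it is the content of \cref{th:matrix_transformation}.
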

	\begin{proof}
		Let $U \in \U_{n,k}$. Then $U \in N_k(f)$ if and only if $U\in N_k(m_i)$ for an odd number of $i \in [t]$. Consequently, $N_k(f)$ is precisely the symmetric difference $N_k(f) = N_k(m_1) \triangle \cdots \triangle N_k(m_t)$, and the result follows from \cref{eq:symmetric_difference}.
	\end{proof}
	
	We next show how to determine $\bigcap_{i=1}^t N_k(m_i)$. We first present an approach for the nonvanishing $k$-flats of a single monomial $m$ and then extend it to the intersection of the sets of nonvanishing $k$-flats of $t$ monomials:
	\begin{theorem}
	\label{th:matrix_transformation}
		Let $m$ be an $n$-variable monomial of degree $k$. We have $N_k(m) = \set{U \in \U_{n,k} : \textnormal{rank}(G_U[\Var(m)]) = k}$ .
	\end{theorem}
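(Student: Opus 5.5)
Let $m = x_I$ with $I = \Var(m)$, $|I| = k$, and let $U \in \U_{n,k}$ with RREF-generator matrix $G_U$, whose rows $g_1,\dots,g_k$ form a basis of $U$. The plan is to write $\sum_{x\in U} m(x)$ as a count of coefficient vectors and then use \cref{lem:odd_solutions}. We parametrize $U$ bijectively by $\alpha = (\alpha_1,\dots,\alpha_k) \in \Fp^k$ via $x = \alpha_1 g_1 + \dots + \alpha_k g_k = \alpha G_U$. For $i \in [n]$, the $i$th coordinate of $x$ is $\alpha \cdot c_i$, where $c_i$ is the $i$th column of $G_U$. Since $m(x) = \prod_{i\in I} x_i$ equals $1$ exactly when $x_i = 1$ for all $i \in I$, we get
\[
\sum_{x\in U} m(x) \equiv \bigl|\set{\alpha \in \Fp^k : \alpha G_U[I] = (1,\dots,1)}\bigr| \pmod 2 .
\]
So $U \in N_k(m)$ if and only if this system has an odd number of solutions.

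Next we transpose. The condition $\alpha G_U[I] = \mathbf{1}$ is the same as $G_U[I]^{T}\alpha^T = \mathbf{1}^T$, a $k\times k$ linear system over $\Fp$ in the unknowns $\alpha_1,\dots,\alpha_k$. Its coefficient vectors are the columns of $G_U[I]^T$, that is, the $k$ rows of $G_U[I]$, which lie in $\Fp^k$. By \cref{lem:odd_solutions}, the number of solutions is odd if and only if these $k$ vectors are linearly independent. This holds if and only if the $k\times k$ matrix $G_U[I]$ is invertible, i.e.\ $\rank(G_U[I]) = k$. Row rank equals column rank, so it does not matter that the lemma is applied to the rows rather than the columns.

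Combining the two steps gives $N_k(m) = \set{U : \rank(G_U[\Var(m)]) = k}$. The argument does not actually use that $G_U$ is in RREF: any basis of $U$ works, since a change of basis multiplies $G_U[I]$ on the left by an invertible matrix and so leaves its rank unchanged. The RREF choice only makes the correspondence $U \mapsto G_U$ canonical. The one place needing care is the parity reduction in the first step, namely that the sum of Boolean values over $U$ equals the parity of the size of the solution set, and that $\alpha \mapsto \alpha G_U$ is a bijection onto $U$ because the rows of $G_U$ are independent.
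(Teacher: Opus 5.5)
Your proof is correct and follows the same route as the paper: you reduce $U \in N_k(m)$ to the parity of the number of $x \in U$ with $x_i = 1$ for all $i \in \Var(m)$, and then apply \cref{lem:odd_solutions} to the rows of the square matrix $G_U[\Var(m)]$. You also spell out the parametrization $\alpha \mapsto \alpha G_U$ of $U$, which the paper leaves implicit, and this makes the step where the lemma is applied clearer.
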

	\begin{proof}
		Let $x = (x_1,\dots,x_n)$. Recall that $U \in N_k(m)$ if $\sum_{x\in U}m(x) \ne 0$ which is equivalent to $m(x)=1$ for an odd number of $x\in U$. Clearly, $m(x) = 1$ if and only if $x_i=1$ for all $i \in \Var(m)$. Thus, $U \in N_k(m)$ if and only if the number of $x \in U$ with $x_i = 1$ for all $i \in \Var(m)$ is odd. According to \cref{lem:odd_solutions} this is the case if and only if the rows of $G_U[\Var(m)]$ are linearly independent, since then $m(x)=1$ for a unique $x\in U$. Since $G_U[\Var(m)]$ is a quadratic $k\times k$ matrix, $U \in N_k(m)$ if and only if $\textnormal{rank}(G_U[\Var(m)]) = k$.
	\end{proof}
	
	\cref{th:matrix_transformation} extends naturally to the intersection of several monomials:
	\begin{corollary}
		\label{cor:matrix_transformation_many}
		Let $m_1,\dots, m_t$ be $n$-variable monomials of degree $k$. Then $\bigcap_{i=1}^t N_k(m_i) = \set{U \in \U_{n,k} : \textnormal{rank}(G_U[\Var(m_i)]) = k \text{ for }i \in [t]}$.
	\end{corollary}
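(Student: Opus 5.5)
This follows directly from \cref{th:matrix_transformation} by taking the intersection over $i \in [t]$. For each $i \in [t]$, $m_i$ is an $n$-variable monomial of degree $k$, so \cref{th:matrix_transformation} applies to it and gives
\[
N_k(m_i) = \set{U \in \U_{n,k} : \rank(G_U[\Var(m_i)]) = k}.
\]
A subspace $U \in \U_{n,k}$ lies in $\bigcap_{i=1}^t N_k(m_i)$ if and only if it lies in every $N_k(m_i)$. By the displayed description, this holds if and only if $\rank(G_U[\Var(m_i)]) = k$ for every $i \in [t]$. This is exactly the set on the right-hand side of the statement.

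The only point to check is that the conditions for different $i$ can be combined. They can, because each one refers to the same matrix. The RREF-generator matrix $G_U$ is attached to $U$ alone and does not depend on the monomial. Each condition asks that a different set of $k$ columns of this one $k \times n$ matrix be linearly independent, so their conjunction is well defined for every $U$.

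I expect no real obstacle here. All the substance, namely \cref{lem:odd_solutions} and the parity count of points $x \in U$ with $m(x) = 1$, is already in the proof of \cref{th:matrix_transformation}.
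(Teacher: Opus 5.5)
Your proposal is correct and follows the paper, which gives no separate proof and simply notes that \cref{th:matrix_transformation} extends to several monomials. You intersect the descriptions $N_k(m_i) = \set{U \in \U_{n,k} : \rank(G_U[\Var(m_i)]) = k}$ over $i \in [t]$, noting that $G_U$ depends only on $U$, which is exactly that argument.
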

	Combining \cref{th:symmetric_difference_subspaces} and \cref{cor:matrix_transformation_many}, we obtain a new technique to determine the number of nonvanishing $k$-flats of a Boolean function.
	
\section{Applications to Boolean and vectorial functions}
\label{sec:applications}
	In this section, we use the new approach presented in \cref{sec:technique} to determine the number of nonvanishing $k$-flats for an infinite family of Boolean functions and to study the nonvanishing $k$-flats of the so-called complement of an $(n,m)$-function. Note that we restrict ourselves mostly to $k$-homogeneous functions in this section because for them it is enough to study their vanishing $k$-subspaces according to \cref{cor:degree_k_functions}. For arbitrary functions of degree $k$, we may simply omit the terms of degree less than $k$, according to \cref{cor:add_smaller_degree}, and use the technique for the resulting $k$-homogeneous function.\par
	
	We first observe that the set of nonvanishing $k$-flats of the monomial $x_{[k]}$ with $x_{[k]}=x_1\cdots x_k$ has an easy description:
	\begin{proposition}
		\label{prop:123}
		For the $n$-variable monomial $x_{[k]}$, we have $N_k(x_{[k]}) = \set{U \in \U_{n,k} : G_U[[k]] = I_k}$ and $|N_k(x_{[k]})| = 2^{k(n-k)}$,	where $I_k$ denotes the $k\times k$ identity matrix.
	\end{proposition}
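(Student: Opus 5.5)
By \cref{th:matrix_transformation} applied to $m = x_{[k]}$ with $\Var(m) = [k]$, we have $N_k(x_{[k]}) = \set{U \in \U_{n,k} : \rank(G_U[[k]]) = k}$. So the plan is to show that for an RREF matrix $G_U$, the condition $\rank(G_U[[k]]) = k$ is equivalent to $G_U[[k]] = I_k$. Then we count the RREF matrices with this property.

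For the characterization, one direction is trivial: if $G_U[[k]] = I_k$, the rank is $k$. Conversely, suppose the first $k$ columns of $G_U$ have rank $k$. The key observation is that in an RREF matrix, the pivot columns are exactly the columns that are not in the span of the preceding columns. Moreover, the $j$th pivot column equals the $j$th standard basis vector $e_j \in \Fp^k$.

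We argue inductively that columns $1,\dots,k$ are all pivot columns. Since $G_U[[k]]$ has rank $k$, each of its columns must increase the rank of the columns before it. Otherwise, the rank of the first $k$ columns would be less than $k$. Hence each of columns $1,\dots,k$ is a pivot column, and column $j$ is the $j$th pivot, i.e.\ equals $e_j$. Therefore $G_U[[k]] = I_k$. The step that needs the most care is stating the pivot property of RREF precisely. A column $j$ is a pivot column if and only if the rank of the first $j$ columns exceeds the rank of the first $j-1$ columns. This follows from the echelon structure: the non-pivot columns before the $r$th pivot have zero entries in rows $\ge r$.

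For the count, an RREF matrix of rank $k$ whose pivots are in columns $1,\dots,k$ has the form $(I_k \mid B)$ with $B$ an arbitrary $k\times(n-k)$ matrix over $\Fp$. This is because there are no further pivots and all rows are already used, so there are no remaining constraints on the columns $k+1,\dots,n$. Conversely, every such matrix is in RREF and has rank $k$. By the bijection between $\U_{n,k}$ and full-rank RREF matrices, this gives $|N_k(x_{[k]})| = 2^{k(n-k)}$.
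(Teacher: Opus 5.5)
Your proof is correct and follows the same route as the paper: apply \cref{th:matrix_transformation} with $\Var(x_{[k]})=[k]$, then count the $2^{k(n-k)}$ free choices of $B$ in $G_U=(I_k\mid B)$. You also spell out, via the pivot structure of the RREF, why $\rank(G_U[[k]])=k$ forces $G_U[[k]]=I_k$, a step the paper leaves implicit.
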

	\begin{proof}
		The first identity follows from \cref{th:matrix_transformation}. We then obtain $|N_k(x_{[k]})|$ by observing that the matrix $G_U$ has $2^{k(n-k)}$ entries that can be arbitrarily chosen from $\Fp$ to obtain a unique subspace $U\in N_k(x_{[k]})$.
	\end{proof}
	
	So for $x_{[k]}$ precisely those subspaces $U\in \U_{n,k}$ are nonvanishing for which $G_U$ is of the shape $\left(I_k|A_{n-k}\right)$, where $A_{n-k}$ is an arbitrary $k\times (n-k)$ matrix. Since we can obtain $x_{[k]}$ from any other monomial of degree $k$ by permuting the variables the following \cref{cor:number_monomials} holds.
	\begin{corollary}
		\label{cor:number_monomials}
		Let $m$ be an $n$-variable monomial of degree $k$. We have $|N_k(m)| = 2^{k(n-k)}$.
	\end{corollary}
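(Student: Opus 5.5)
The approach is to reduce to \cref{prop:123} by a coordinate permutation, exactly as the sentence preceding the statement suggests. Let $m = x_I$ with $|I| = k$, and choose a permutation $\sigma$ of $[n]$ with $\sigma([k]) = I$. Let $P_\sigma$ be the linear permutation of $\Fpnv$ that permutes coordinates, $(P_\sigma x)_{\sigma(i)} = x_i$. Then $m \circ P_\sigma = x_{[k]}$.

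Since $P_\sigma$ is a linear bijection, it maps $\U_{n,k}$ bijectively onto itself. Moreover,
\[
\sum_{x\in U} x_{[k]}(x) = \sum_{x \in U} m(P_\sigma x) = \sum_{y \in P_\sigma(U)} m(y).
\]
So $U \in N_k(x_{[k]})$ if and only if $P_\sigma(U) \in N_k(m)$. Hence $U \mapsto P_\sigma(U)$ is a bijection $N_k(x_{[k]}) \to N_k(m)$, and $|N_k(m)| = |N_k(x_{[k]})| = 2^{k(n-k)}$ by \cref{prop:123}.

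A direct alternative avoids any reordering argument. By \cref{th:matrix_transformation}, $U\in N_k(m)$ if and only if the $k$ columns of $G_U$ indexed by $I$ are linearly independent. Equivalently, $U$ has a unique basis $B$ (a $k\times n$ matrix) with $B[I] = I_k$ and the remaining $k(n-k)$ entries free. Each choice of those entries gives a distinct subspace, since $B[I]=I_k$ pins down the basis. Again this counts $2^{k(n-k)}$ subspaces.

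No step is a real obstacle. The only point needing care is the alternative route: $B$ is generally not in RREF when $I\neq[k]$, so the counting must use the normalization $B[I]=I_k$ rather than $G_U$ itself. Alternatively, this is handled cleanly by the permutation argument above.
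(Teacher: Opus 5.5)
Your proof is correct and follows the paper's route. The paper derives this corollary from \cref{prop:123} by noting that permuting the variables turns any degree-$k$ monomial into $x_{[k]}$, and your map $U \mapsto P_\sigma(U)$ makes that bijection between $N_k(x_{[k]})$ and $N_k(m)$ explicit. Your alternative direct count, which normalizes the basis to $B = G_U[I]^{-1}G_U$ so that $B[I]=I_k$, is also valid and amounts to the proof of \cref{prop:123} carried out for an arbitrary column set $I$.
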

	Analogously, by permuting the variables we can represent any Boolean function $f$ of degree $k$ in the form $f=x_{[k]} + g$ for some function $g$. This allows us to focus on matrices of shape $\left(I_k|R_{n-k}\right)$.	In \cref{th:d-intersecting_nonvanishingflats}, we precisely determine the number of nonvanishing $k$-flats for the following family of Boolean functions. 
	
	\begin{definition}
		\label{def:d-intersecting}
		Let $0 \le d \le n-1$, and let $f=m_1+\dots+m_t$ be an $n$-variable Boolean function. We say that $f$ is \emph{$d$-intersecting} if there exists a subset~$D \subseteq [n]$ with $|D| = d$ such that	$\Var(m_i) \cap \Var(m_j) = D$ for all $i,j \in [t]$ with $i \ne j$.
	\end{definition}
	Note that clearly $0 \le d \le \deg(f)-1$ and $\Var(m_1)\cap\dots\cap \Var(m_t) = D$. 
	
	\begin{lemma}
		\label{lem:d-intersecting_intersection}
		Let $f=m_1+\dots+m_t$ be a $d$-intersecting, $k$-homogeneous $n$\nobreakdash-variable Boolean function. Then 
		\begin{equation}
		\label{eq:d-intersecting_intersection}
			\left|\bigcap_{i=1}^t N_k(m_i)\right| = 2^{k(n-tk+(t-1)d)} \prod_{i=0}^{k-d-1}(2^k-2^{i+d})^{t-1}.
		\end{equation}
	\end{lemma}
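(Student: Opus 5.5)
We count the subspaces described by \cref{cor:matrix_transformation_many}, using a normalized basis in place of the RREF-generator matrix. Write $\Var(m_i) = D \cup E_i$, where $|D| = d$ and $|E_i| = k-d$. The $d$-intersecting condition makes the sets $E_1,\dots,E_t$ pairwise disjoint and disjoint from $D$. Hence the monomials involve exactly $d + t(k-d)$ variables, and the remaining $n - d - t(k-d) = n - tk + (t-1)d$ coordinates are not involved in any $m_i$.

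First, the condition $\rank(G_U[I]) = k$ does not depend on the chosen basis of $U$. Indeed, any $k\times n$ matrix $M$ whose rows form a basis of $U$ satisfies $M = P\,G_U$ for some $P \in \GL_k(\Fp)$, hence $M[I] = P\,G_U[I]$ and the ranks agree. So we may use any basis matrix. Since the problem is invariant under permuting coordinates, we order the columns as $D$, then $E_1$, then $E_2,\dots,E_t$, then the rest.

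For $U \in N_k(m_1)$, the block $M[\Var(m_1)]$ is invertible for every basis matrix $M$. Multiplying by its inverse gives a unique basis matrix of $U$ with $M[\Var(m_1)] = I_k$, as in \cref{prop:123}. Conversely, every $k\times n$ matrix with this identity block has rank $k$ and defines a subspace in $N_k(m_1)$. This gives a bijection between $N_k(m_1)$ and matrices $(I_k \mid R)$ with $R$ an arbitrary $k\times(n-k)$ matrix.

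Under this normalization the columns indexed by $D$ are the standard vectors $e_1,\dots,e_d$. The condition $\rank(M[\Var(m_i)]) = k$ for $i \ge 2$ then says exactly that the $k-d$ columns indexed by $E_i$, together with $e_1,\dots,e_d$, form a basis of $\Fp^k$. Choosing these columns one at a time, each must avoid the span of $e_1,\dots,e_d$ and the previously chosen columns. This gives $\prod_{j=0}^{k-d-1}(2^k - 2^{d+j})$ choices.

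Because the sets $E_2,\dots,E_t$ are disjoint, these choices are independent, which yields the factor $\prod_{j=0}^{k-d-1}(2^k-2^{j+d})^{t-1}$. The remaining $n - tk + (t-1)d$ columns are unconstrained, each with $2^k$ choices, which gives the factor $2^{k(n-tk+(t-1)d)}$. Multiplying the two factors gives \cref{eq:d-intersecting_intersection}.

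The main point needing care is the normalization step: showing that the map $U \mapsto$ (unique basis with identity on $\Var(m_1)$) is a bijection onto all matrices of the form $(I_k\mid R)$, and that the rank conditions transfer to it. Both follow from the basis-independence of $\rank(M[I])$ shown above. The remaining counting is routine.
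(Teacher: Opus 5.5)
Your proof is correct and follows essentially the same route as the paper. Both arguments normalize so that the columns indexed by $\Var(m_1)$ form $I_k$; the paper does this by assuming $m_1 = x_{[k]}$, $D=[d]$ and reading off $G_U = (I_k \mid B_1 \mid \cdots \mid B_{t-1} \mid A)$. Both then count $\prod_{j=d}^{k-1}(2^k-2^j)$ completions of $e_1,\dots,e_d$ to a basis for each of the $t-1$ blocks, and $2^k$ choices for each of the $n-tk+(t-1)d$ free columns. Your explicit check that $\rank(M[I])$ does not depend on the chosen basis matrix of $U$ is a useful addition, since it justifies the coordinate permutation the paper simply takes ``without loss of generality''.
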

	\begin{proof}
		Using \cref{cor:matrix_transformation_many}, we count all $U\in \U_{n,k}$ with $\rank(G_U[\Var(m_i)]) = k$ for all $i \in [t]$ . Without loss of generality suppose $m_1 = x_{[k]}$ and $D = [d]$ if $d > 0$. Then $G_U$ has shape $ (I_k | B_{{(k-d)}_1} | \cdots | B_{{(k-d)}_{t-1}} | A_{n-tk+(t-1)d})$, where $B_{{(k-d)}_i}$ is a $k\times(k-d)$ matrix such that $\rank(e_1,\dots,e_d | B_{{(k-d)}_i}) = k$ for all $i \in [t-1]$, and the entries of $A_{n-tk+(t-1)d}$ can be arbitrarily chosen.	It follows that we have $2^{k(n-tk+(t-1)d)}$ distinct choices for $A_{n-tk+(t-1)d}$ that we can combine with $\prod_{j=d}^{k-1}(2^k-2^{j})$ choices for $B_{{(k-d)}_i}$ for each $i \in [t-1]$. 
	\end{proof}
	
	\cref{lem:d-intersecting_intersection} allows us to prove our first main theorem.
		
	\begin{theorem}
	\label{th:d-intersecting_nonvanishingflats}
		Let $f$ be a $d$-intersecting, $k$-homogeneous $n$-variable Boolean function. We have
		\begin{equation}
		\label{eq:d-intersecting_nonvanishingflats}
			|N_k(f)| = \sum_{\ell=1}^{t} (-2)^{\ell-1} \binom{t}{\ell} 2^{k(n-\ell k+(\ell-1)d)} \prod_{i=0}^{k-d-1}(2^k-2^{i+d})^{\ell-1}.
		\end{equation}
	\end{theorem}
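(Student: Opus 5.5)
The plan is to combine \cref{th:symmetric_difference_subspaces} with \cref{lem:d-intersecting_intersection}. Write $f=m_1+\dots+m_t$ with $D\subseteq[n]$, $|D|=d$, witnessing that $f$ is $d$-intersecting. By \cref{th:symmetric_difference_subspaces},
\[
|N_k(f)| = \sum_{\ell=1}^{t}(-2)^{\ell-1}\sum_{\{i_1,\dots,i_\ell\}}\Bigl|\bigcap_{j=1}^{\ell}N_k(m_{i_j})\Bigr|,
\]
where the inner sum runs over the $\ell$-subsets of $[t]$. The index set should be $[t]$ and not $[n]$, since the symmetric difference is taken over the $t$ sets $N_k(m_1),\dots,N_k(m_t)$.

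The key observation is that the $d$-intersecting property passes to subfamilies. For any $\ell$-subset $\{i_1,\dots,i_\ell\}\subseteq[t]$, the function $m_{i_1}+\dots+m_{i_\ell}$ is still $k$-homogeneous. It is also $d$-intersecting with the same $D$, because pairwise intersections of variable sets are unchanged. Hence \cref{lem:d-intersecting_intersection} applies with $t$ replaced by $\ell$, and gives
\[
\Bigl|\bigcap_{j=1}^{\ell}N_k(m_{i_j})\Bigr| = 2^{k(n-\ell k+(\ell-1)d)}\prod_{i=0}^{k-d-1}(2^k-2^{i+d})^{\ell-1}.
\]
This value does not depend on which subset was chosen. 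Each inner sum is therefore $\binom{t}{\ell}$ times this quantity, which is exactly \cref{eq:d-intersecting_nonvanishingflats}.

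The only point needing care is the edge case $\ell=1$. Here \cref{lem:d-intersecting_intersection} should reduce to \cref{cor:number_monomials}, giving $2^{k(n-k)}$ with an empty product exponent, and it does. For $\ell=1$ the subfamily is a single monomial, so the definition of $d$-intersecting holds vacuously for any $D$ of size $d$ (provided $d\le k$), and the count agrees.

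I also need the lemma's count to be valid for every $\ell\le t$. It requires the $\ell$ variable sets to be disjoint outside $D$, so that the columns for the blocks $B$ and $A$ are distinct; this holds because $n\ge tk-(t-1)d\ge \ell k-(\ell-1)d$. With these checks, the theorem follows by direct substitution, so I expect no substantial obstacle.
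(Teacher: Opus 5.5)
Your proof is correct and is essentially the paper's argument: apply \cref{th:symmetric_difference_subspaces}, note that every $\ell$-subfamily of $m_1,\dots,m_t$ is again $d$-intersecting, so \cref{lem:d-intersecting_intersection} gives the same intersection size for every $\ell$-subset, and collect the $\binom{t}{\ell}$ equal terms. You are right that the inner sum in \cref{th:symmetric_difference_subspaces} should run over $\ell$-subsets of $[t]$, not $[n]$; one small inversion in your last paragraph is that the disjointness of the variable sets outside $D$ comes directly from the definition of $d$-intersecting, and $n \ge tk-(t-1)d$ is a consequence of that disjointness rather than its reason.
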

	\begin{proof}
		Let $f = m_1 + \cdots + m_t$. We use \cref{th:symmetric_difference_subspaces} to calculate $|N_k(f)|$ with $\left|\bigcap_{i=1}^t N_k(m_i)\right|$ as in \cref{eq:d-intersecting_intersection}, and take into account that for any $\ell \in [t]$ the intersection size $\left|\bigcap_{i=1}^t N_k(m_i)\right|$ is constant for all  $\set{i_1,\dots,i_{\ell}} \subseteq \binom{t}{\ell}$.
	\end{proof}
	
	In the case $d=0$, we can simplify \cref{th:d-intersecting_nonvanishingflats}:
	\begin{proposition}
	\label{prop:direct_sum_nonvanishingflats}
		Let $f$ be a $0$-intersecting, $k$-homogeneous $n$-variable Boolean function. And define $G = \prod_{i=0}^{k-1} (2^k-2^i)$. We have \[|N_k(f)| = \frac{2^{kn-1}}{G}\left(1-\left(1-\frac{G}{2^{k^2-1}}\right)^t\right).\]
	\end{proposition}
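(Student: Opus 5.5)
Specialize \cref{th:d-intersecting_nonvanishingflats} to $d=0$ and sum the resulting series in closed form via the binomial theorem. For $d=0$ the product in \cref{eq:d-intersecting_nonvanishingflats} becomes $\prod_{i=0}^{k-1}(2^k-2^i)^{\ell-1} = G^{\ell-1}$, where $G=\prod_{i=0}^{k-1}(2^k-2^i)$ is the number of invertible $k\times k$ matrices over $\Fp$. The power of two becomes $2^{k(n-\ell k)} = 2^{kn}\,2^{-\ell k^2}$. Hence
\[
|N_k(f)| = \sum_{\ell=1}^{t} (-2)^{\ell-1}\binom{t}{\ell} 2^{kn-\ell k^2} G^{\ell-1}.
\]

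Next I factor out everything that does not depend on $\ell$, so that each summand is a pure $\ell$th power. Writing $(-2)^{\ell-1}G^{\ell-1}2^{-\ell k^2} = -\frac{1}{2G}\left(-\frac{2G}{2^{k^2}}\right)^{\ell} = -\frac{1}{2G}\left(-\frac{G}{2^{k^2-1}}\right)^{\ell}$, I obtain
\[
|N_k(f)| = -\frac{2^{kn}}{2G}\sum_{\ell=1}^{t}\binom{t}{\ell}\left(-\frac{G}{2^{k^2-1}}\right)^{\ell}.
\]
The binomial theorem gives $\sum_{\ell=1}^{t}\binom{t}{\ell}y^\ell=(1+y)^t-1$ with $y=-G/2^{k^2-1}$. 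Therefore
\[
|N_k(f)| = \frac{2^{kn-1}}{G}\left(1-\left(1-\frac{G}{2^{k^2-1}}\right)^{t}\right),
\]
which is the claimed formula.

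There is no real obstacle. The only care needed is the sign and power-of-two bookkeeping when pulling out the factor $-\tfrac{1}{2G}$, which is routine. As a sanity check, $t=1$ should give $2^{kn-1}/G\cdot G/2^{k^2-1}=2^{k(n-k)}$, matching \cref{cor:number_monomials}.
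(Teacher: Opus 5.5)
Your proof is correct and follows the paper's argument exactly. You set $d=0$ in \cref{eq:d-intersecting_nonvanishingflats}, factor out $-\frac{2^{kn}}{2G}$ so that each summand becomes $\binom{t}{\ell}\bigl(-G/2^{k^2-1}\bigr)^{\ell}$, and close the sum with the binomial theorem; your $t=1$ check against \cref{cor:number_monomials} is a useful consistency test that the paper omits.
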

	\begin{proof}
		We consider \cref{eq:d-intersecting_nonvanishingflats} with $d=0$. Factoring out $\frac{2^{kn}}{-2G}$, we obtain	$|N_k(f)| = -\frac{2^{kn-1}}{G}\sum_{\ell=1}^{t}\binom{t}{\ell}\left(\frac{-G}{2^{k^2-1}}\right)^\ell.$	Adding and subtracting $1$ and using the binomial theorem, the right-hand side becomes $-\frac{2^{kn-1}}{G}\left(-1+\left(1-\frac{G}{2^{k^2-1}}\right)^t\right)$.
	\end{proof}
	Note that we can use \cref{prop:direct_sum_nonvanishingflats} to determine the number of nonvanishing $2$-flats of the bent function $x_1x_2+\cdots+ x_{n-1}x_n$ if $n$ is even and of the semi-bent function $x_1x_2+\cdots+ x_{n-2}x_{n-1}$ if $n$ is odd as these are $0$-intersecting functions, see \cref{th:bent_best_nonvanishing,prop:semi-bent}. \par 
	
	As another application of our new technique, we introduce the complement of a Boolean function and present a bijection between the nonvanishing $k$-flats of a function and those of its complement.
	
	\begin{definition}
	\label{def:complement_function}
		Let $m$ be an $n$-variable monomial. We call the $n$-variable monomial $m'$ with $\Var(m')=[n] \setminus \Var(m)$ the \emph{complement} of $m$ and denote it by $\overline{m}$. Analogously, for a Boolean function $f$ defined by $f=m_1+\dots+m_t$ we define its \emph{complement} by $\overline{m_1}+\dots+\overline{m_t}$ and denote it by $\overline{f}$.
	\end{definition}
	
	Clearly, the following three properties hold: $\overline{\overline{f}} = f$; if $f$ has degree $r$, then $\overline{f}$ has degree $n-r$; and if $f$ is homogeneous so is $\overline{f}$. Moreover, equivalence is preserved by taking the complement of two homogeneous functions:
	
	\begin{proposition}[\cite{hou1996,langevinleander2008}]
	\label{prop:equivalence_complements}
		Let $f,g$ be $r$-homogeneous $n$-variable Boolean functions. Then $f$ and $g$ are degree-$(r-1)$ equivalent if and only if $\overline{f}$ and $\overline{g}$ are degree-$(n-r-1)$ equivalent.
	\end{proposition}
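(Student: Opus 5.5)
The plan is to recast both equivalences as orbit equivalences of one group action, and then show that complementation intertwines the two actions up to the automorphism $L\mapsto (L^{-1})^{T}$ of $\GL_n(\Fp)$. Throughout, $1\le r\le n-1$, so that both $r$ and $n-r$ are at least $1$.

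\emph{Step 1 (reduction to linear maps).} Write $V_r := RM(r,n)/RM(r-1,n)$, with basis the classes of the monomials $x_I$, $|I|=r$. Take $f,g$ $r$-homogeneous with $g = M\circ f\circ L + R$ and $\degalg(R)\le r-1$. The affine permutation $M$ of $\Fp$ is either $y\mapsto y$ or $y\mapsto y+1$, and the constant $1$ can be absorbed into $R$. Write $L(x)=L_0x+b$ with $L_0$ linear. The translation by $b$ changes $f\circ L$ only in degree $<r$. So $f$ and $g$ are degree-$(r-1)$ equivalent if and only if $[g]=[f\circ L_0]$ in $V_r$ for some $L_0\in\GL_n(\Fp)$. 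I would read ``degree-$r$ equivalent'' in \cref{def:degree-r-equivalent} as $\degalg(R)\le r$, since otherwise equivalence is not even reflexive. The same reduction applies to $\overline f,\overline g$ in $V_{n-r}$.

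\emph{Step 2 (the action in the monomial basis).} I will show that the matrix of $[f]\mapsto[f\circ L]$ on $V_r$, in the monomial basis, is the $r$th compound matrix $C_r(L)$, up to transposition conventions. The point is that in $\prod_{i\in I}(\sum_j L_{ij}x_j)$ every product containing a repeated variable satisfies $x_j^2=x_j$, so it drops to degree $<r$. Each surviving term is squarefree, and mod $2$ the alternating signs vanish. The coefficient of $x_J$ is therefore the permanent of the $I\times J$ submatrix, which equals its determinant over $\Fp$. Hence $V_r\cong\wedge^r$ as a $\GL_n$-module, with matrix entries $\det L[I,J]$.

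\emph{Step 3 (the key identity).} Complementation $\kappa\colon V_r\to V_{n-r}$, $x_I\mapsto x_{[n]\setminus I}$, is a linear bijection. I will prove
\[
\kappa([f\circ L]) = [\kappa(f)\circ L^{*}], \qquad L^*=(L^{-1})^{T}
\]
(or the analogous statement with $L^{-1}$ and $L^T$ in the other order). This is the classical Jacobi complementary-minor identity: $\det L[I,J] = \det L\cdot\det L^{-1}[J^c,I^c]$ up to sign, and $\det L = 1$ and all signs are $1$ over $\Fp$.

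A more conceptual route I would try first avoids index bookkeeping. Use the pairing $B(f,h)=\sum_{x}f(x)h(x)$, that is, the coefficient of $x_{[n]}$ in $fh$, on $V_r\times V_{n-r}$. It is well defined modulo lower degrees, because a product of degree $<n$ has even weight. It is $\GL_n$-invariant, since $x\mapsto Lx$ permutes $\Fpnv$. It satisfies $B(x_I,x_J)=1$ if and only if $J=[n]\setminus I$. Consequently $B(f,h)=\langle \kappa f, h\rangle$, where $\langle\cdot,\cdot\rangle$ is the standard dot product in the monomial basis of $V_{n-r}$. By Step 2 and $C_r(L)^T=C_r(L^T)$, the dot-product adjoint of the action of $L$ is the action of $L^T$. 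Combining the invariance of $B$ with this adjointness gives the identity above.

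\emph{Step 4 (conclusion).} If $[g]=[f\circ L]$, then $[\overline g]=[\overline f\circ L^*]$ with $L^*\in\GL_n(\Fp)$, so $\overline f$ and $\overline g$ are degree-$(n-r-1)$ equivalent. The converse follows by the same argument with $r$ replaced by $n-r$, using $\overline{\overline f}=f$.

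I expect Step 3 to be the main obstacle. The hard part is getting the transpose and inverse conventions right, and checking that the invariance of $B$ together with the adjointness really pins down $\kappa(f\circ L)$ rather than only its pairing. That last point holds because $B$ is nondegenerate, which is clear from the complementary-monomial description.
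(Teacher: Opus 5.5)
Your proof is correct. The paper gives no argument of its own to compare it with: it cites Hou and Langevin--Leander for this proposition and only records the explicit form $\overline{g}=\overline{f}\circ(L^{-1})^{T}+h'$. Your Step~3 establishes exactly this intertwining of $L$ with $(L^{-1})^{T}$, so your proposal supplies the proof that the paper leaves to the literature.

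You give two ways to get it:
\begin{itemize}
\item Jacobi's complementary-minor identity, where $\det L=1$ and all signs vanish over $\Fp$.
\item The $\GL(n,2)$-invariant pairing $B(f,h)=\sum_x f(x)h(x)$. This is the duality between $RM(r,n)/RM(r-1,n)$ and $RM(n-r,n)/RM(n-r-1,n)$ that underlies the cited results.
\end{itemize}
The pairing route is the more transparent one. Invariance of $B$ produces the inverse, the symmetry $C_r(L)^{T}=C_r(L^{T})$ produces the transpose, and nondegeneracy turns equality of pairings into equality of classes. In effect it reproves the mod-$2$ Jacobi identity instead of invoking it.

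Your two interpretive choices are necessary, not just convenient:
\begin{itemize}
\item Reading the definition as $\degalg(R)\le r$ is what makes degree-$r$ equivalence reflexive.
\item The restriction $1\le r\le n-1$ is what makes both degree-$(r-1)$ and degree-$(n-r-1)$ equivalence defined in the paper's sense. It is also what lets you absorb the constant coming from $M$ into $R$ on both sides.
\end{itemize}
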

	More precisely, if $g = f \circ L + h$ for some $L \in \GL(n,2)$ and a Boolean function~$h$ of algebraic degree at most $r-1$, then $\overline{g} = \overline{f} \circ (L^{-1})^{T} + h'$ for a Boolean function~$h'$ of algebraic degree at most $n-r-1$.\par
	
	To establish the aforementioned bijection, we recall the following definition: If $U$ is a $k$-subspace of an $n$-dimensional vector space~$V$, we call the set $U^\perp := \set{v \in V | \langle v,u\rangle = 0 \text{ for all } u \in U}$	the \emph{orthogonal complement} of $U$. It is well known that $U^\perp$ is an $(n-k)$-subspace of $V$. Note that $U^\perp = \ker G_U$.
	
	\begin{lemma}
	\label{lem:orthogonal_complement}
		Let $U \in \U_{n,k}$, and let $I$ be a $k$-subset of $[n]$. We have $\rank(G_U[I]) = k$ if and only if $\rank (G_{U^\perp}[[n]\setminus I]) = n-k$.
	\end{lemma}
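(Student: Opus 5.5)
The plan is to reduce both rank conditions to a statement about how the coordinate subspaces interact with $U$ and $U^\perp$. Set $J := [n]\setminus I$, so that $|J| = n-k$. For an index set $S\subseteq[n]$, write $E_S := \langle e_i : i\in S\rangle$ for the corresponding coordinate subspace. Note that rank conditions on column subsets do not depend on the chosen generator matrix: if $G'$ is any matrix whose rows span $U$, then $G' = PG_U$ for some invertible $P$, and hence $\rank(G'[I]) = \rank(G_U[I])$. We may therefore use $G_U$ and $G_{U^\perp}$ only through the row spaces $U$ and $U^\perp$.

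The first step is to reformulate the left-hand condition. The matrix $G_U[I]$ is a $k\times k$ matrix, so it has rank $k$ if and only if its columns are linearly independent. Suppose $\lambda\in\Fp^k$ satisfies $\lambda^T G_U[I] = 0$. Then the vector $u := \lambda^T G_U \in U$ vanishes on all coordinates in $I$, which means $u\in E_J$. Because the rows of $G_U$ are independent, $u = 0$ holds only when $\lambda = 0$. Consequently $\rank(G_U[I]) = k$ if and only if $U\cap E_J = \{0\}$.

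The same argument applies to the right-hand condition. The matrix $G_{U^\perp}[J]$ is an $(n-k)\times(n-k)$ matrix, and it has full rank if and only if $U^\perp\cap E_I = \{0\}$.

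The key step is to show that these two conditions are equivalent. Orthogonal complements reverse inclusions and turn intersections into sums, so $(U\cap E_J)^\perp = U^\perp + E_J^\perp$. With respect to the standard scalar product, $E_J^\perp = E_I$. The dimension count is $\dim U^\perp + \dim E_I = (n-k)+k = n$. This gives the chain
\[
U^\perp\cap E_I=\{0\} \iff U^\perp + E_I = \Fp^n \iff (U\cap E_J)^\perp = \Fp^n \iff U\cap E_J=\{0\}.
\]
Here the first equivalence uses $\dim U^\perp + \dim E_I = n$ together with the dimension formula for sums of subspaces. The last equivalence uses $\dim W^\perp = n-\dim W$, which is valid over $\Fp$ because the standard form is nondegenerate, even though $W\cap W^\perp$ may be nonzero.

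The only point that needs care is this last step over $\Fp$. The identity $(A\cap B)^\perp = A^\perp + B^\perp$ and the dimension formula $\dim W^\perp = n-\dim W$ are both valid for a nondegenerate bilinear form, so the argument does not require $U\oplus U^\perp = \Fp^n$, which can fail over $\Fp$.

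An alternative route is to put $G_U = (I_k\mid A)$ after permuting columns so that $I$ comes first, and to use $G_{U^\perp} \sim (A^T\mid I_{n-k})$. This approach only works when $I$ consists of the pivot columns, so I prefer the coordinate-free argument above.
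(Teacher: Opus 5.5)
Your proof is correct, and it takes a different route from the paper's.

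\textbf{The paper's route.} The paper argues with explicit matrices in coding-theory language. Assuming $\rank(G_U[I])=k$, it permutes the columns indexed by $I$ to the front and row-reduces to standard form $(I_k\mid P)$. It then reads off the parity-check matrix $(P^T\mid I_{n-k})$ and permutes back. The columns indexed by $[n]\setminus I$ form an identity block, and this property survives the row operations that lead to $G_{U^\perp}$. Only the forward implication is written out; the converse is left to symmetry via $(U^\perp)^\perp=U$.

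\textbf{Your route.} You first turn each rank condition into a coordinate-free statement: $\rank(G_U[I])=k \iff U\cap E_J=\{0\}$, i.e.\ no nonzero codeword is supported inside $J$. You then dualize using $(U\cap E_J)^\perp=U^\perp+E_I$ and a dimension count. Compared with the paper, your argument:
\begin{itemize}
\item gives both directions at once;
\item makes it evident that the choice of RREF generator matrices plays no role;
\item correctly avoids assuming $U\oplus U^\perp=\Fp^n$.
\end{itemize}
The paper's version is more constructive and fits its matrix-centred technique, since it exhibits the invertible $(n-k)\times(n-k)$ block explicitly.

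\textbf{A correction to your closing remark.} The standard-form approach is not limited to the case where $I$ is the pivot set of $G_U$. After permuting $I$ to the front, the hypothesis $\rank(G_U[I])=k$ is exactly what makes those columns the pivots. So that route works in general; it is the classical fact that the complement of an information set of a code is an information set of the dual code.

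\textbf{A minor wording slip.} In your first step you say ``columns'' but then test dependence of the rows via $\lambda^T G_U[I]=0$. This is harmless because the matrix is square.
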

	\begin{proof}
		We consider $U$ as a linear $[n,k]$ code with generator matrix $G_U$ whose columns $i_1,\dots,i_k \in I$ are linearly independent. Clearly, $U$ is equivalent to a code $U'$ with generator matrix $G_{U'} = G_U\sigma$, where $\sigma \in S_n, \sigma = (1\,i_i)\cdots (k\,i_k)$, permutes the columns of $G_U$ such that $i_1,\dots,i_k$ are the first columns of $G_{U'}$. We transform $G_{U'}$ into standard form $(I_k|P)$. Then the parity check matrix of $U'$ is $H_{U'} = (P^T|I_{n-k})$, and $H_U = H_{U'}\sigma$ is a parity check matrix of $U$. Clearly, the columns $j_1,\dots,j_{n-k} \in [n]\setminus I$ of $H_U$ are linearly independent, and we can transform~$H_U$ into $G_{U^\perp}$ by elementary row operations which preserve this property.
	\end{proof}
	
	\begin{theorem}
	\label{th:nonvanishing_flats_complement}
		Let $f$ be a $k$-homogeneous $n$-variable Boolean function, and let $U \in \U_{n,k}$. Then $U \in N_k(f)$ if and only if $U^\perp \in N_k(\overline{f})$. 
	\end{theorem}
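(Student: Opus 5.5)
Note first that the statement must be read with $N_{n-k}(\overline{f})$ in place of $N_k(\overline{f})$: $U^\perp$ is an $(n-k)$-subspace and $\overline{f}$ is $(n-k)$-homogeneous. The plan is to reduce to monomials and then apply \cref{th:matrix_transformation} together with \cref{lem:orthogonal_complement}.

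Write $f = m_1 + \dots + m_t$ with each $m_i$ of degree $k$. Then $\overline{f} = \overline{m_1} + \dots + \overline{m_t}$, where each $\overline{m_i}$ has degree $n-k$ and $\Var(\overline{m_i}) = [n]\setminus \Var(m_i)$. As in the proof of \cref{th:symmetric_difference_subspaces}, $U \in N_k(f)$ holds exactly when $U \in N_k(m_i)$ for an odd number of indices $i \in [t]$. Applying the same reasoning to $\overline{f}$ and the $(n-k)$-subspace $U^\perp$, we get $U^\perp \in N_{n-k}(\overline{f})$ exactly when $U^\perp \in N_{n-k}(\overline{m_i})$ for an odd number of $i$. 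It therefore suffices to prove the claim monomialwise:
\[
U \in N_k(m_i) \iff U^\perp \in N_{n-k}(\overline{m_i}) \quad \text{for every } i \in [t].
\]
Summing these equivalences over $i$ modulo $2$ then gives the theorem.

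For a single degree-$k$ monomial $m$ with $I = \Var(m)$, we argue as follows.
\begin{itemize}
\item By \cref{th:matrix_transformation}, $U \in N_k(m)$ if and only if $\rank(G_U[I]) = k$.
\item By \cref{lem:orthogonal_complement}, this holds if and only if $\rank(G_{U^\perp}[[n]\setminus I]) = n-k$.
\item Apply \cref{th:matrix_transformation} again, now to the degree-$(n-k)$ monomial $\overline{m}$ with $\Var(\overline{m}) = [n]\setminus I$ and the $(n-k)$-subspace $U^\perp$. The rank condition is exactly $U^\perp \in N_{n-k}(\overline{m})$.
\end{itemize}
Nothing in \cref{th:matrix_transformation} depends on the particular degree, so it applies verbatim with $k$ replaced by $n-k$.

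There are two places where care is needed. The first is the degenerate cases $k=0$ or $k=n$. For these, the convention is that $x_\emptyset = 1$ and that $U$ is either $\{0\}$ or $\Fpnv$; the claim can be checked directly. The second is the standing assumption that the monomials $m_i$ in the ANF are distinct. Then the $\overline{m_i}$ are distinct as well, so no cancellation occurs on the complement side and the parity counts correspond term by term. Beyond these points I expect no real obstacle: the substantive work is already done in \cref{lem:orthogonal_complement}.
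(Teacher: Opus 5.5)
Your proposal is correct and matches the argument the paper compresses into one line. Monomialwise, \cref{th:matrix_transformation} and \cref{lem:orthogonal_complement} give $U\in N_k(m)\iff U^\perp\in N_{n-k}(\overline{m})$, and the parity argument behind \cref{th:symmetric_difference_subspaces} (linearity of the sum over $\F_2$) lifts this from monomials to $f$. You are right that the statement should read $N_{n-k}(\overline{f})$; your caveat about distinct monomials is harmless but unnecessary, since $\sum_{x\in U^\perp}\overline{f}(x)=\sum_i\sum_{x\in U^\perp}\overline{m_i}(x)$ holds regardless of any cancellation.
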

	\begin{proof}
		The result follows from combining \cref{th:matrix_transformation} with \cref{lem:orthogonal_complement}.
	\end{proof}
	
	We remark that \cref{th:nonvanishing_flats_complement} does not hold if $f$ is not $k$-homogeneous: Suppose $m$ is a term of $f$ of degree $\ell < k$. According to \cref{cor:sum-free-degree-bound}, then $m$ sums to zero over every $k$-flat. However, in this case, $\overline{m}$ has degree $n-\ell > n-k$ and, thus, does not necessarily sum to $0$ over every $(n-k)$-flat. So in this case, $U \in N_k(f)$ does not imply $U^\perp \in N_k(\overline{f})$.	We obtain \cref{prop:best_n-2} as an immediate consequence of \cref{th:nonvanishing_flats_complement} and \cref{th:bent_best_nonvanishing}.
	\begin{theorem}
	\label{prop:best_n-2}
		Let $n$ be even, and let $f$ be an $n$-variable Boolean function of degree $n-2$. Then $|N_{\A,n-2}(f)| \le \frac{2^{2n-4}(2^n-1)}{3}$ and $\frac{N_{\A,n-2}(f)}{|\A_{n,2}|} \le \frac{2^{n-2}}{2^{n-1}-1}$. Equality holds if and only if $\overline{f'}$ is EA-equivalent to a bent function, where $f'$ is an $(n-2)$-homogeneous function that is degree-$(n-3)$ equivalent to $f$.
	\end{theorem}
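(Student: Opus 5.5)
The plan is to reduce the statement to \cref{th:bent_best_nonvanishing} by passing through the homogeneous part of $f$ and its complement. First, since $f$ has degree $n-2$, we drop all terms of degree less than $n-2$. This gives an $(n-2)$-homogeneous function $f'$ with $f = f' + g$ and $\degalg(g) < n-2$. By \cref{cor:add_smaller_degree}, $N_{\A,n-2}(f) = N_{\A,n-2}(f')$. By \cref{cor:degree_k_functions} we then have $|N_{\A,n-2}(f')| = 2^{2}|N_{n-2}(f')|$, so it suffices to count nonvanishing $(n-2)$-subspaces of $f'$. The choice of $f'$ is not unique, but any two such choices are degree-$(n-3)$ equivalent. \cref{prop:equivalence_complements} then makes the condition on $\overline{f'}$ well defined up to degree-$1$, i.e. EA-equivalence, and EA-equivalence preserves bentness.

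Next we apply \cref{th:nonvanishing_flats_complement} to $f'$ with $k=n-2$. The map $U\mapsto U^\perp$ is a bijection $\U_{n,n-2}\to\U_{n,2}$, and $U\in N_{n-2}(f')$ if and only if $U^\perp\in N_2(\overline{f'})$. Hence $|N_{n-2}(f')| = |N_2(\overline{f'})|$. Here $\overline{f'}$ is $2$-homogeneous, so \cref{cor:degree_k_functions} gives $|N_{\A,2}(\overline{f'})| = 2^{n-2}|N_2(\overline{f'})|$. Combining the two counts,
\[
|N_{\A,n-2}(f)| = 4\,|N_2(\overline{f'})| = 2^{4-n}\,|N_{\A,2}(\overline{f'})|.
\]

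Now we invoke \cref{th:bent_best_nonvanishing} for $\overline{f'}$, which gives $|N_{\A,2}(\overline{f'})|\le \frac{2^{2n-4}(2^n-1)}{3}$, with equality exactly when $\overline{f'}$ is bent. Scaling by $2^{4-n}$ gives a bound on $|N_{\A,n-2}(f)|$. The ratio bound then follows by dividing by $|\A_{n,n-2}| = 4\qbinom[2]{n}{n-2}$. Since $\qbinom[2]{n}{n-2}=\qbinom[2]{n}{2}$, this equals $2^{4-n}|\A_{n,2}|$. So the ratio $|N_{\A,n-2}(f)|/|\A_{n,n-2}|$ equals $|N_{\A,2}(\overline{f'})|/|\A_{n,2}|$, which the theorem bounds by $\frac{2^{n-2}}{2^{n-1}-1}$. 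For the equality clause, a function EA-equivalent to a bent function is bent, so the two formulations agree.

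The main obstacle is bookkeeping of the constants. With the factor $2^{4-n}$, the absolute bound I obtain is $\frac{2^{n}(2^n-1)}{3}$, not the stated $\frac{2^{2n-4}(2^n-1)}{3}$. Likewise the natural denominator is $|\A_{n,n-2}|$ rather than $|\A_{n,2}|$ as written. I would therefore carefully recheck the factors $2^{n-k}$ from \cref{cor:degree_k_functions} on both sides. If my computation stands, I would state the bound in terms of $|N_{n-2}(f')|$ and the ratio, which transfer exactly. The structural argument of homogenization, complement, and then \cref{th:bent_best_nonvanishing} carries the whole proof.
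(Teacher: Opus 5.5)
Your argument is the paper's. Its proof of \cref{prop:best_n-2} is the one-line remark that the result follows from \cref{th:nonvanishing_flats_complement} and \cref{th:bent_best_nonvanishing}. That is exactly your chain: homogenize, pass from $U$ to $U^\perp$ and from $f'$ to $\overline{f'}$, then apply the bent bound. Your treatment of the non-uniqueness of $f'$ via \cref{prop:equivalence_complements} is fine. You also correctly read the conclusion of \cref{th:nonvanishing_flats_complement} as $U^\perp\in N_{n-k}(\overline{f})$, which is what its proof gives.

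Your bookkeeping is right, and you should commit to it rather than hedge. You have $|N_{\A,n-2}(f)| = 4|N_{n-2}(f')| = 4|N_2(\overline{f'})| = 2^{4-n}|N_{\A,2}(\overline{f'})|$. So the sharp statement is $|N_{\A,n-2}(f)| \le \frac{2^n(2^n-1)}{3}$ and $|N_{\A,n-2}(f)|/|\A_{n,n-2}| \le \frac{2^{n-2}}{2^{n-1}-1}$, with equality if and only if $\overline{f'}$ is bent.

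The printed constants were carried over from \cref{th:bent_best_nonvanishing} without the factors $2^{n-k}$, and they agree with yours only for $n=4$. Take $n=6$. The printed bound $\frac{2^8\cdot 63}{3}=5376$ exceeds the total number $|\A_{6,4}|=4\qbinom[2]{6}{2}=2604$ of $4$-flats. Meanwhile $f=x_3x_4x_5x_6+x_1x_2x_5x_6+x_1x_2x_3x_4$, whose complement is the bent function $x_1x_2+x_3x_4+x_5x_6$, attains your value $1344=\frac{2^6\cdot 63}{3}$.

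So for $n\ge 4$ the printed inequalities are true, but only as consequences of yours, using $2^n\le 2^{2n-4}$ and $|\A_{n,n-2}|=2^{4-n}|\A_{n,2}|\le|\A_{n,2}|$. The printed equality clause fails for every even $n\ge 6$. What you prove is the correct version of the theorem, and your proof of it is complete.
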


	Eventually, we extend \cref{def:complement_function} and \cref{th:nonvanishing_flats_complement} to vectorial functions.
	\begin{definition}
	\label{def:complement_vectorial}
		Let $F$ be an $(n,m)$-function defined by Boolean coordinate functions $f_1,\dots,f_m$. We call the $(n,m)$-function $\overline{F}$ defined by the complements $\overline{f_1},\dots,\overline{f_m}$ of $f_1,\dots,f_m$ the \emph{complement} of $F$.
	\end{definition}
	
	Note that \cref{prop:equivalence_complements} transfers immediately to vectorial functions:
	\begin{corollary}
	\label{cor:equivalence_complements_vectorial}
		Let $F$ and $G$ be $r$-homogeneous $(n,m)$-functions. Then $F$ and $G$ are degree-$(r-1)$ equivalent if and only if $\overline{F}$ and $\overline{G}$ are degree-$(n-r-1)$ equivalent.
	\end{corollary}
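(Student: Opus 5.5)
The approach is to apply the Boolean case coordinatewise, with the one subtlety that the complement construction is linear in the function while \cref{prop:equivalence_complements} is stated for Boolean functions only. The statement to prove is \cref{cor:equivalence_complements_vectorial}: for $r$-homogeneous $(n,m)$-functions $F$ and $G$, $F$ and $G$ are degree-$(r-1)$ equivalent if and only if $\overline{F}$ and $\overline{G}$ are degree-$(n-r-1)$ equivalent.

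First I would record two facts about the complement map $f\mapsto\overline{f}$ on $r$-homogeneous Boolean functions.
\begin{enumerate}
\item It is $\Fp$-linear, since it acts termwise on the ANF.
\item By the refined form of \cref{prop:equivalence_complements}, if $g = f\circ L + h$ with $L\in\GL(n,2)$ and $\degalg(h)\le r-1$, then $\overline{g} = \overline{f}\circ (L^{-1})^T + h'$ with $\degalg(h')\le n-r-1$.
\end{enumerate}
Write the degree-$(r-1)$ equivalence as $G = M\circ F\circ L + R$ with $M,L$ affine permutations and $\degalg(R)\le r-1$.

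Next I reduce to linear $L$ and $M$. Write $L(x)=L_0x+a$ and $M(y)=M_0y+b$ with $L_0,M_0$ linear. Then $F\circ L = F\circ L_0 + (\text{terms of degree}<r)$, because translating the input of an $r$-homogeneous function only introduces lower-degree terms. The constant $b$ also has degree $0<r$. All of these lower-degree pieces can therefore be absorbed into $R$, giving $G = M_0 F L_0 + R'$ with $\degalg(R')\le r-1$.

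Now compare homogeneous degree-$r$ parts. Since $F$ and $G$ are $r$-homogeneous, the $r$-homogeneous part of $M_0 F L_0$ equals $G$, and $M_0 F L_0 - G$ has degree at most $r-1$. Coordinatewise this reads $g_i = \sum_j (M_0)_{ij}\, (f_j\circ L_0) + h_i$ with $\degalg(h_i)\le r-1$. Here I would apply the refined form of \cref{prop:equivalence_complements} to each $f_j\circ L_0$: its degree-$r$ homogeneous part $\tilde f_j$ is degree-$(r-1)$ equivalent to $f_j$, so $\overline{\tilde f_j} = \overline{f_j}\circ (L_0^{-1})^T + h_j'$ with $\degalg(h_j')\le n-r-1$. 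By linearity of the complement,
\[\overline{g_i} = \sum_j (M_0)_{ij}\,\overline{\tilde f_j} = \sum_j (M_0)_{ij}\,\overline{f_j}\circ (L_0^{-1})^T + h_i''.\]
That is, $\overline{G} = M_0\circ\overline{F}\circ (L_0^{-1})^T + R''$ with $\degalg(R'')\le n-r-1$, which is a degree-$(n-r-1)$ equivalence.

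For the converse, note that $\overline{\overline{F}}=F$ and that $\overline{F}$, $\overline{G}$ are $(n-r)$-homogeneous. Running the same argument with $r$ replaced by $n-r$ gives the other direction.

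The main obstacle is the step that takes the complement of "$f\circ L_0$ modulo lower degree". Complement is only defined termwise on a fixed ANF, so I must make sure I complement the $r$-homogeneous part, not $f\circ L_0$ itself. This is exactly what the refined form of \cref{prop:equivalence_complements} handles when $h$ is allowed to be arbitrary of degree at most $r-1$. Linearity then lets the $M_0$-combination pass through the complement.
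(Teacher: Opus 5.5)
Your proof is correct and takes the same route as the paper, which only asserts that \cref{prop:equivalence_complements} transfers coordinatewise to give $\overline{G} = M\circ\overline{F}\circ(L^{-1})^T + H'$ for linear $L$ and $M$. Your write-up fills in what the paper leaves implicit: absorbing the translation parts of the affine maps into the lower-degree term, complementing the degree-$r$ homogeneous part of each $f_j\circ L_0$ rather than $f_j\circ L_0$ itself, using $\Fp$-linearity of the complement so that $M_0$ passes through it, and obtaining the converse from $\overline{\overline{F}}=F$.
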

	More precisely, if $G = M \circ F \circ L + H$ for $L\in \GL(n,2)$, $M\in \GL(m,2)$ and some $(n,m)$-function $H$ of algebraic degree at most $r-1$, then $\overline{G} = M \circ F \circ (L^{-1})^T + H'$ for some $(n,m)$-function $H'$ of algebraic degree at most $n-r-1$. The following result is a direct consequence of \cref{th:nonvanishing_flats_complement}:
	\begin{corollary}
	\label{cor:nonvanishing_flats_vectorial_complement}
		Let $F$ be a $k$-homogeneous $(n,m)$-function, and let $U \in \U_k$. Then $U \in N_k(F)$ if and only if $U^\perp \in N_k(\overline{F})$.
	\end{corollary}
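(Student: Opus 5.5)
We reduce the statement to the Boolean case, \cref{th:nonvanishing_flats_complement}, by passing to the component functions of $F$. Note first that the statement should be read with $N_{n-k}(\overline{F})$: $U^\perp$ has dimension $n-k$, and $\overline{F}$ is $(n-k)$-homogeneous, exactly as in the Boolean theorem. Let $F$ have coordinate functions $f_1,\dots,f_m$, each $k$-homogeneous (a coordinate may be zero, with zero complement). For $U \in \U_{n,k}$ we have $\sum_{x\in U}F(x) \ne 0$ if and only if $\sum_{x\in U} f_j(x) = 1$ for at least one $j \in [m]$. Hence
\[
N_k(F) = \bigcup_{j=1}^m N_k(f_j), \qquad N_{n-k}(\overline{F}) = \bigcup_{j=1}^m N_{n-k}(\overline{f_j}).
\]
The second identity uses that the coordinates of $\overline{F}$ are exactly $\overline{f_1},\dots,\overline{f_m}$, by \cref{def:complement_vectorial}.

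Next we apply \cref{th:nonvanishing_flats_complement} to each coordinate. Since $f_j$ is $k$-homogeneous, $U \in N_k(f_j)$ holds if and only if $U^\perp \in N_{n-k}(\overline{f_j})$. Taking the union over $j$ gives $U \in N_k(F)$ if and only if $U^\perp \in N_{n-k}(\overline{F})$.

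One point needs checking: the complement must be well defined and additive on ANFs. A $k$-homogeneous $f_j$ is a sum of distinct degree-$k$ monomials. Complementation sends distinct monomials to distinct monomials, so no cancellation occurs and $\overline{f_j}$ is a genuine sum of distinct degree-$(n-k)$ monomials. This is also exactly what the Boolean theorem uses through \cref{th:matrix_transformation} and \cref{lem:orthogonal_complement}.

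The only real obstacle is this bookkeeping: the dimension index, and the fact that vectorial nonvanishing means at least one coordinate is nonvanishing, not a symmetric difference. Once the union description is in place, the result follows directly coordinatewise. The map $U \mapsto U^\perp$ is a bijection $\U_{n,k}\to\U_{n,n-k}$, so we also get $|N_k(F)| = |N_{n-k}(\overline{F})|$.
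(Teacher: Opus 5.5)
Your proof is correct and matches the paper's approach: the paper calls the corollary a direct consequence of \cref{th:nonvanishing_flats_complement}, and your coordinatewise description $N_k(F)=\bigcup_{j} N_k(f_j)$, together with the matching description for $\overline{F}$, is exactly the step that makes this explicit. You are also right that the statement should read $U\in\U_{n,k}$ and $U^\perp\in N_{n-k}(\overline{F})$.
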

	
	\cref{cor:nonvanishing_flats_vectorial_complement} in particular implies our second main theorem.
	\begin{theorem}
	\label{th:complement_sum-free}
		Let $F$ be a $k$-homogeneous $(n,m)$-function. Then $F$ is $k$-th order sum-free if and only if $\overline{F}$ is $(n-k)$th-order sum-free.
	\end{theorem}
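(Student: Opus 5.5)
The plan is to reduce both sum-freeness conditions to statements about nonvanishing \emph{subspaces} and then transfer them through the bijection $U \mapsto U^\perp$ from \cref{cor:nonvanishing_flats_vectorial_complement}. First, $F$ is $k$-homogeneous and not identically zero, so it has algebraic degree exactly $k$. (If $F = 0$, both $F$ and $\overline{F} = 0$ are trivially not sum-free.) By \cref{cor:degree_k_functions}, $F$ is $k$th-order sum-free if and only if $|N_k(F)| = |\U_{n,k}|$, i.e.\ every $k$-subspace is nonvanishing. Note that the corollary as literally worded says ``if and only if $|N_k(F)|=0$''; I read this as intending that there are no \emph{vanishing} $k$-subspaces, i.e.\ $N_k(F) = \U_{n,k}$, and I will use that form. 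Likewise, $\overline{F}$ is $(n-k)$-homogeneous with algebraic degree $n-k$, so by the same corollary with $n-k$ in place of $k$, $\overline{F}$ is $(n-k)$th-order sum-free if and only if $N_{n-k}(\overline{F}) = \U_{n,n-k}$.

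Next I use that $U \mapsto U^\perp$ is a bijection from $\U_{n,k}$ onto $\U_{n,n-k}$, since $(U^\perp)^\perp = U$ and dimensions are complementary. By \cref{cor:nonvanishing_flats_vectorial_complement}, which is written with $N_k(\overline{F})$ but must mean $N_{n-k}(\overline{F})$ because $U^\perp$ has dimension $n-k$, we have $U \in N_k(F)$ if and only if $U^\perp \in N_{n-k}(\overline{F})$. Hence the bijection maps $N_k(F)$ onto $N_{n-k}(\overline{F})$, so $|N_k(F)| = |N_{n-k}(\overline{F})|$. Since $|\U_{n,k}| = |\U_{n,n-k}|$ by the symmetry of the Gaussian binomial in \cref{eq:number_of_subspaces}, $N_k(F)$ is all of $\U_{n,k}$ exactly when $N_{n-k}(\overline{F})$ is all of $\U_{n,n-k}$. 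This gives the equivalence.

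The main obstacle is making sure \cref{cor:nonvanishing_flats_vectorial_complement} really holds for vectorial $F$, because a vector sum being nonzero is not a coordinatewise ``odd count'' statement. I would verify it coordinatewise. We have $\sum_{x\in U} F(x) \neq 0$ if and only if some coordinate satisfies $\sum_{x \in U} f_i(x) = 1$, i.e.\ $U \in N_k(f_i)$ for some $i$. By \cref{th:nonvanishing_flats_complement} this holds if and only if $U^\perp \in N_{n-k}(\overline{f_i})$ for the same $i$, which in turn is equivalent to $\sum_{x\in U^\perp}\overline{F}(x)\neq 0$. Thus $N_k(F) = \bigcup_i N_k(f_i)$, and the bijection commutes with this union.

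For \cref{th:nonvanishing_flats_complement} itself, I take it as given via \cref{th:matrix_transformation} and \cref{lem:orthogonal_complement}. The point there is that $\overline{f}$ is $(n-k)$-homogeneous, so \cref{th:matrix_transformation} applies to $\overline{f}$ with $n-k$ in place of $k$. Then for each monomial $m_j$, the condition $U \in N_k(m_j)$ is equivalent to $U^\perp \in N_{n-k}(\overline{m_j})$, and summing these parities over $j$ yields the claim.
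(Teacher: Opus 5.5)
Your proof is correct and follows the paper's route. The paper deduces the theorem from \cref{cor:nonvanishing_flats_vectorial_complement}, i.e.\ the bijection $U\mapsto U^\perp$, combined with \cref{cor:degree_k_functions}, exactly as you do; you read both misprints correctly (sum-freeness means $N_k(F)=\U_{n,k}$, and the target set is $N_{n-k}(\overline{F})$), and your coordinatewise check supplies the justification the paper omits when it calls the vectorial corollary a direct consequence.
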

	
	Thanks to \cref{cor:equivalence_complements_vectorial}, degree-$(k-1)$ inequivalent $k$-th order sum-free functions yield degree-$(n-k-1)$ inequivalent $(n-k)$th-order sum-free functions. Therefore, \cref{th:complement_sum-free} implies that every quadratic APN $(n,n)$-function $F$ gives rise to an $(n-2)$th-order sum-free $(n,n)$-function: We omit the affine terms in the ANF of $F$ to obtain $F'$, which preserves the APN property, and then $\overline{F'}$ is $(n-2)$th-order sum-free. Thus, the infinite families of quadratic APN functions, see \cite{likaleyski2024}, and the millions of sporadic quadratic APN functions on $\F_2^8$ by \cite{beierleetal2025preprint} lead to millions of inequivalent $(n-2)$th-order sum-free functions.
	
\section*{Acknowledgments}
	The author would like to thank Domingo Perez, Alexandr Polujan and Alexander Pott for the constructive discussions during this work. The author is funded by the Deutsche Forschungsgemeinschaft (DFG, German Research Foundation) – 541511634.
	

\bibliographystyle{alpha}
\bibliography{bib.bib} 

@phdthesis{arshad2018,
	title = {Contributions to the theory of almost perfect nonlinear functions},
	author = {Arshad, Razi},
	year = {2018},
	type={{PhD} thesis},
	address={Magdeburg},
	institution={Otto von Guericke University},
	url = {http://dx.doi.org/10.25673/13406},
	urn = {urn:nbn:de:gbv:ma9:1-1981185920-134691},
}

@misc{beierleetal2025preprint,
	title={Millions of inequivalent quadratic {APN} functions in eight variables}, 
	author={Christof Beierle and Philippe Langevin and Gregor Leander and Alexandr Polujan and Shahram Rasoolzadeh},
	year={2025},
	eprint={2508.04644},
	archivePrefix={arXiv},
	primaryClass={math.CO},
	url={https://arxiv.org/abs/2508.04644}, 
}

@article {brinkmannleander2008,
	AUTHOR = {Brinkmann, Marcus and Leander, Gregor},
	TITLE = {On the classification of {APN} functions up to dimension five},
	JOURNAL = {Des. Codes Cryptogr.},
	FJOURNAL = {Designs, Codes and Cryptography. An International Journal},
	VOLUME = {49},
	YEAR = {2008},
	NUMBER = {1-3},
	PAGES = {273--288},
	ISSN = {0925-1022},
	MRCLASS = {94A60 (11T71 94C10)},
	MRNUMBER = {2438456},
	DOI = {10.1007/s10623-008-9194-6},
	URL = {https://doi.org/10.1007/s10623-008-9194-6},
}

@book {carlet2021book,
	AUTHOR = {Carlet, Claude},
	TITLE = {Boolean functions for cryptography and coding theory},
	PUBLISHER = {Cambridge University Press, New York},
	YEAR = {2020},
	PAGES = {xiv+562},
	ISBN = {978-1-108-47380-4; [9781108606806]},
	MRCLASS = {94A60 (06E30 11T71 94D10)},
	MRNUMBER = {4625791},
	DOI = {10.1017/9781108606806},
	URL = {https://doi.org/10.1017/9781108606806},
}

@misc{carlethou2024,
	title={More on the sum-freedom of the multiplicative inverse function}, 
	author={Carlet, Claude and Hou, Xiang-dong},
	year={2024},
	eprint={2407.14660},
	archivePrefix={arXiv},
	primaryClass={math.NT},
	howpublished = {arXiv:2407.14660},
	url={https://arxiv.org/abs/2407.14660}, 
}

@article {carlet2025generalizations,
	AUTHOR = {Carlet, Claude},
	TITLE = {Two generalizations of almost perfect nonlinearity},
	JOURNAL = {J. Cryptology},
	FJOURNAL = {Journal of Cryptology. The Journal of the International
	Association for Cryptologic Research},
	VOLUME = {38},
	YEAR = {2025},
	NUMBER = {2},
	PAGES = {Paper No. 20, 32},
	ISSN = {0933-2790},
	MRCLASS = {94A60},
	MRNUMBER = {4870858},
	DOI = {10.1007/s00145-025-09538-5},
	URL = {https://doi.org/10.1007/s00145-025-09538-5},
}

@article {carlet2025inverse,
	AUTHOR = {Carlet, Claude},
	TITLE = {On the vector subspaces of {$\mathbb {F}_{2^n}$} over which the
	multiplicative inverse function sums to zero},
	JOURNAL = {Des. Codes Cryptogr.},
	FJOURNAL = {Designs, Codes and Cryptography. An International Journal},
	VOLUME = {93},
	YEAR = {2025},
	NUMBER = {4},
	PAGES = {1237--1254},
	ISSN = {0925-1022},
	MRCLASS = {11T06 (12E05 12E10)},
	MRNUMBER = {4894915},
	DOI = {10.1007/s10623-024-01531-6},
	URL = {https://doi.org/10.1007/s10623-024-01531-6},
}

@misc{carlet2025tDegree,
	author = {Claude Carlet},
	title = {A notion on {S}-boxes for a partial resistance to some integral attacks},
	howpublished = {Cryptology {ePrint} Archive, Paper 2024/1693},
	year = {2025},
	url = {https://eprint.iacr.org/2024/1693}
}

@misc{ebeling2024,
	title={On Sum-Free Functions}, 
	author={Alyssa Ebeling and Hou, Xiang-dong and Ashley Rydell and Shujun Zhao},
	year={2024},
	eprint={2410.10426},
	archivePrefix={arXiv},
	primaryClass={math.NT},
	howpublished={arXiv:2410.10426},
	url={https://arxiv.org/abs/2410.10426}, 
}

@article {hou1996,
	AUTHOR = {Hou, Xiang-dong},
	TITLE = {{${\textrm GL}(m,2)$} acting on {$R(r,m)/R(r-1,m)$}},
	JOURNAL = {Discrete Math.},
	FJOURNAL = {Discrete Mathematics},
	VOLUME = {149},
	YEAR = {1996},
	NUMBER = {1-3},
	PAGES = {99--122},
	ISSN = {0012-365X,1872-681X},
	MRCLASS = {94B05 (20B25)},
	MRNUMBER = {1375102},
	MRREVIEWER = {Jonathan\ I.\ Hall},
	DOI = {10.1016/0012-365X(94)00342-G},
	URL = {https://doi.org/10.1016/0012-365X(94)00342-G},
}

@article {hou2006,
	AUTHOR = {Hou, Xiang-dong},
	TITLE = {Affinity of permutations of {$\mathbb F_2^n$}},
	JOURNAL = {Discrete Appl. Math.},
	FJOURNAL = {Discrete Applied Mathematics. The Journal of Combinatorial
	Algorithms, Informatics and Computational Sciences},
	VOLUME = {154},
	YEAR = {2006},
	NUMBER = {2},
	PAGES = {313--325},
	ISSN = {0166-218X},
	MRCLASS = {06E30 (94B05)},
	MRNUMBER = {2194404},
	MRREVIEWER = {Marcel Wild},
	DOI = {10.1016/j.dam.2005.03.022},
	URL = {https://doi.org/10.1016/j.dam.2005.03.022},
}

@misc{houzhao2025a,
	title={Two Absolutely Irreducible Polynomials over $\mathbb{F}_2$ and Their Applications to a Conjecture by Carlet}, 
	author={Hou,Xiang-dong and Zhao, Shujun},
	year={2025},
	eprint={2502.04545},
	archivePrefix={arXiv},
	primaryClass={math.NT},
	howpublished={arXiv:2502.04545},
	url={https://arxiv.org/abs/2502.04545}
}

@misc{houzhao2025b,
	title={On a Conjecture About the Sum-Freedom of the Binary Multiplicative Inverse Function}, 
	author={Hou, Xiang-dong and Zhao, Shujun},
	year={2025},
	eprint={2504.21805},
	archivePrefix={arXiv},
	primaryClass={math.NT},
	howpublished={arXiv:2504.21805},
	url={https://arxiv.org/abs/2504.21805}
}

@incollection {kavutetal2006,
	AUTHOR = {Kavut, Sel\c{c}uk and Maitra, Subhamoy and Sarkar, Sumanta and
	Y\"{u}cel, Melek D.},
	TITLE = {Enumeration of 9-variable rotation symmetric {B}oolean
	functions having nonlinearity {$>240$}},
	BOOKTITLE = {Progress in cryptology---{INDOCRYPT} 2006},
	SERIES = {Lecture Notes in Comput. Sci.},
	VOLUME = {4329},
	PAGES = {266--279},
	PUBLISHER = {Springer, Berlin},
	YEAR = {2006},
	ISBN = {978-3-540-49767-7; 3-540-49767-6},
	MRCLASS = {94A60 (94B75 94C10)},
	MRNUMBER = {2454915},
	DOI = {10.1007/11941378\{_}19}

@article {kavut2010,
	AUTHOR = {Kavut, Sel\c{c}uk and Y\"{u}cel, Melek D.},
	TITLE = {9-variable {B}oolean functions with nonlinearity 242 in the
	generalized rotation symmetric class},
	JOURNAL = {Inform. and Comput.},
	FJOURNAL = {Information and Computation},
	VOLUME = {208},
	YEAR = {2010},
	NUMBER = {4},
	PAGES = {341--350},
	ISSN = {0890-5401,1090-2651},
	MRCLASS = {94C10},
	MRNUMBER = {2640837},
	DOI = {10.1016/j.ic.2009.12.002},
	URL = {https://doi.org/10.1016/j.ic.2009.12.002},
}

@incollection {langevinleander2008,
	AUTHOR = {Langevin, Philippe and Leander, Gregor},
	TITLE = {Classification of {B}oolean quartic forms in eight variables},
	BOOKTITLE = {Boolean functions in cryptology and information security},
	SERIES = {NATO Sci. Peace Secur. Ser. D Inf. Commun. Secur.},
	VOLUME = {18},
	PAGES = {139--147},
	PUBLISHER = {IOS, Amsterdam},
	YEAR = {2008},
	ISBN = {978-1-58603-878-6},
	MRCLASS = {94C10},
	MRNUMBER = {2581571},
}

@article {li2020,
	AUTHOR = {Li, Shuxing and Meidl, Wilfried and Polujan, Alexandr and
	Pott, Alexander and Riera, Constanza and St\u{a}nic\u{a}, Pantelimon},
	TITLE = {Vanishing flats: a combinatorial viewpoint on the planarity of
	functions and their application},
	JOURNAL = {IEEE Trans. Inform. Theory},
	FJOURNAL = {Institute of Electrical and Electronics Engineers.
	Transactions on Information Theory},
	VOLUME = {66},
	YEAR = {2020},
	NUMBER = {11},
	PAGES = {7101--7112},
	ISSN = {0018-9448},
	MRCLASS = {94D10},
	MRNUMBER = {4173630},
	MRREVIEWER = {Marco Calderini},
	DOI = {10.1109/TIT.2020.3002993},
	URL = {https://doi.org/10.1109/TIT.2020.3002993},
}

@article {likaleyski2024,
	AUTHOR = {Li, Kangquan and Kaleyski, Nikolay},
	TITLE = {Two new infinite families of {APN} functions in trivariate
	form},
	JOURNAL = {IEEE Trans. Inform. Theory},
	FJOURNAL = {Institute of Electrical and Electronics Engineers.
	Transactions on Information Theory},
	VOLUME = {70},
	YEAR = {2024},
	NUMBER = {2},
	PAGES = {1436--1452},
	ISSN = {0018-9448,1557-9654},
	MRCLASS = {94D10},
	MRNUMBER = {4703723},
	MRREVIEWER = {Jos\'{e}\ Luis\ Verdegay},
	DOI = {10.1109/tit.2023.3313148},
	URL = {https://doi.org/10.1109/tit.2023.3313148},
}

@article {maitra2002,
	AUTHOR = {Maitra, Subhamoy and Sarkar, Palash},
	TITLE = {Cryptographically significant {B}oolean functions with five
	valued {W}alsh spectra},
	JOURNAL = {Theoret. Comput. Sci.},
	FJOURNAL = {Theoretical Computer Science},
	VOLUME = {276},
	YEAR = {2002},
	NUMBER = {1-2},
	PAGES = {133--146},
	ISSN = {0304-3975,1879-2294},
	MRCLASS = {94A60 (94C10)},
	MRNUMBER = {1896350},
	DOI = {10.1016/S0304-3975(01)00196-7},
	URL = {https://doi.org/10.1016/S0304-3975(01)00196-7},
}

@article {meidlpolujanpott2023,
	AUTHOR = {Meidl, Wilfried and Polujan, Alexandr and Pott, Alexander},
	TITLE = {Linear codes and incidence structures of bent functions and
	their generalizations},
	JOURNAL = {Discrete Math.},
	FJOURNAL = {Discrete Mathematics},
	VOLUME = {346},
	YEAR = {2023},
	NUMBER = {1},
	PAGES = {Paper No. 113157, 22},
	ISSN = {0012-365X,1872-681X},
	MRCLASS = {94D10 (94B05)},
	MRNUMBER = {4481196},
	MRREVIEWER = {Pantelimon\ St\u{a}nic\u{a}},
	DOI = {10.1016/j.disc.2022.113157},
	URL = {https://doi.org/10.1016/j.disc.2022.113157},
}

\end{document}